\newtheorem{thm}{Theorem}[section]
\theoremstyle{plain}
\newtheorem{lem}[thm]{Lemma}
\newtheorem{prop}[thm]{Proposition}
\theoremstyle{definition}
\theoremstyle{remark}
\newtheorem{rem}[thm]{Remark}
\newtheorem*{thma}{{\bf Theorem A}}
\newtheorem*{thmb}{{\bf Theorem B}}
\newtheorem*{thmc}{{\bf Theorem C}}
\definecolor{A}{rgb}{.75,1,.75}
\numberwithin{equation}{section}
\newcommand{\ds}{\displaystyle}
\newcommand{\Z}{\mathbb Z}
\newcommand{\F}{\mathbb F}
\newcommand{\St}{{\rm St}}
\newcommand{\sym}{S^{\bullet}(V)}
\newcommand{\wedg}{\wedge^{\bullet}(V)}
\newcommand{\Det}{{\rm Det}}
\begin{document}

\title[Dickson-Mui invariants and Steinberg module multiplicity]
{Twisted Dickson-Mui invariants and the Steinberg module
multiplicity}
\author[Wan and Wang]{Jinkui Wan and Weiqiang Wang}

\address{
(Wan) Department of Mathematics,
Beijing Institute of Technology,
Beijing, 100081, P.R. China. }
\email{wjk302@gmail.com}

\address{
(Wang) Department of Mathematics, University of Virginia,
Charlottesville,VA 22904, USA.}
\email{ww9c@virginia.edu}


\begin{abstract}
We determine the invariants, with arbitrary determinant twists, of
the parabolic subgroups of the finite general linear group
$GL_n(q)$ acting on the tensor product of the symmetric algebra
$\sym$ and the exterior algebra $\wedg$ of the natural
$GL_n(q)$-module $V$. In addition, we obtain the graded
multiplicity of the Steinberg module of $GL_n(q)$ in
$\sym\otimes\wedg$, twisted by an arbitrary determinant power.
\end{abstract}

\maketitle



\section{Introduction}

\subsection{}
Let $p$ be a fixed prime and $\F_q$ be the finite field with
$q=p^r$ for some $r\geq 1$. The finite general linear group
$GL_n(q)$ acts naturally on the symmetric algebra $\sym$, where
$V=\F_q^n$ is the standard $GL_n(q)$-module. It is a classical
theorem of Dickson~\cite{D} that the algebra of
$GL_n(q)$-invariants in $\sym$ is a polynomial algebra in $n$
generators (called Dickson invariants), and  its Hilbert
series is given by
\begin{equation*}
H \big(\sym^{GL_n(q)};t \big)
=\frac{1}{\prod^{n-1}_{j=0}(1-t^{q^{n}-q^{j}})}.
\end{equation*}
Let $P_I$ be the parabolic subgroup associated to a
composition $I=(n_1,n_2,\ldots,n_{\ell})$ of $n$, cf.~
\eqref{parabolic}. Generalizing \cite{D},
Kuhn and Mitchell~\cite{KM} showed
that the algebra $\sym^{P_I}$ of
$P_I$-invariants in $\sym$ is a polynomial algebra in $n$
explicit generators.

In another classical work \cite{Mui}, Mui determined the
$GL_n(q)$-invariants in the tensor product $\sym\otimes\wedg$,
where $\wedg$ denotes the exterior algebra of $V$. Actually Mui
only formulated his invariant results for $q=p$ (i.e., $r=1$) as
he was mainly interested in the connections with the cohomology
groups of $(\Z/p\Z)^n$ and of symmetric groups, but his algebraic
proof remains valid for a general $q$. More recently, Minh and
T\`{u}ng \cite{MT} determined the  $P_I$-invariants in
$\sym\otimes\wedg$, again in the case $q=p$, as they needed to use
some Steenrod algebra arguments. In any event, topologists (with
the exception of \cite{KM}) are mostly content with establishing
various invariant results for the prime field $\F_p$, which are
all they needed for the topological applications. We refer to
Wilkerson \cite{Wi} and Smith \cite{Sm} for expositions on
Dickson invariants and  connections to topology.

\subsection{}

The first goal of this paper is to study the generalizations of
Dickson-Mui invariants over a  finite field $\F_q$ in a general
framework with arbitrary twists by the $GL_n(q)$-determinant
module $\rm Det$. Our first main result is the following.
\begin{thma}   \label{th:free}
Let $I=(n_1,n_2,\ldots,n_{\ell})$ be a composition of $n$. Then,
for $0 \leq k\leq q-2$, $(\sym\otimes\wedg\otimes {\rm
Det}^k)^{P_I}$ is a free module of rank $2^n$ over the algebra
$(\sym)^{P_I}$.
\end{thma}
We refer to Theorem~\ref{thm:invparabolic} for a more precise
version of Theorem~A where an explicit basis for the free module
is given. Theorem~A in the special case when $k=0$ and $q=p$ is
due to Minh and T\`{u}ng, and our approach toward Theorem~A (or
rather Theorem~\ref{thm:invparabolic}) is a generalization of
\cite{MT}, which is in turn built heavily on \cite{Mui}. In the
case for $k=0$, we supply an elementary proof for the part of
arguments in \cite{MT} which used Steenrod algebra, hence removing
their restriction on $q=p$. Hence, we easily obtain in
Theorem~\ref{th:KI-inv} a generalization to a general $q$ of the
results of \cite{MT} on invariants by other distinguished
subgroups of $GL_n(q)$. The new cases with nontrivial twists by
$\rm Det$ for $1\leq k\leq q-2$ are motivated by Theorem~C below
and used in its proof. (Note that $\Det^{q-1}$ is trivial.)
Theorem~B indicates that the answers in the twisted cases are
actually simpler.

\begin{thmb}
Let $1\leq k\leq q-2$. Then, the Hilbert series (in variables $t$
and $s$) of the bi-graded vector space $(\sym\otimes\wedg\otimes
{\Det}^k)^{P_I}$ is equal to
\begin{align*}
t^{(q-2-k)\frac{q^n-1}{q-1}} \cdot \frac{\prod^{n-1}_{i=0}
(s+t^{q^i})}{\prod^{\ell}_{i=1}\prod^{n_i}_{j=1}(1-t^{q^{m_i}-q^{m_i-j}})}.
\end{align*}
(See \eqref{eq:mi} for the definition of $m_i$ in terms of $I$.)
\end{thmb}
We remark that the case for $k=0$, which is excluded in Theorem~B,
also affords an explicit yet more complicated formula, see
Theorem~\ref{thm:Hser.para.}. Theorem~B and
Theorem~\ref{thm:Hser.para.} will be derived from
Theorem~\ref{thm:invparabolic}.

As observed by Crabb \cite{Cr}, the space $\sym\otimes(\wedg)^*$
is naturally an algebra, and its subalgebra of
$GL_n(q)$-invariants affords a simpler description than in the
original setting of Mui. As an application of Theorem~A, we show
that the subalgebra of $P_I$-invariants in $\sym\otimes(\wedg)^*$
is isomorphic to the tensor product of $\sym^{P_I}$ and an
exterior algebra for any $I$, recovering the main result
in~\cite{Cr} by setting $I=(n)$ and hence $P_I =GL_n(q)$.

\subsection{}

Recall (cf. Humphreys \cite{Hu}) that $GL_n(q)$ affords a
distinguished module $\St$, called Steinberg module, which is
projective and absolutely irreducible. Our second main result
concerns about the bi-graded multiplicity of $\St$ in $\sym\otimes
\wedg \otimes {\rm Det}^k$.
\begin{thmc}
(1) The bi-graded multiplicity of $\St$ in $\sym\otimes \wedg$ is
\begin{align*}
H_{\St}\big(\sym\otimes \wedg;t,s \big) =\ds t^{-n} \cdot\frac{
(st^{q^n-1}+t^{q^{n-1}})
\prod^{n-2}_{i=0}(s+t^{q^i})}{\prod^n_{i=1}(1-t^{q^i-1})}.
\end{align*}

(2) For $1\leq k\leq q-2$, the bi-graded multiplicity of $\St$ in
$\sym\otimes\wedg\otimes {\rm Det}^k$ is
\begin{align*}
H_{\St}\big( \sym\otimes \wedg\otimes {\rm Det}^k;t,s \big) =\ds
t^{-n+(q-1-k)\frac{q^n-1}{q-1}} \cdot
\frac{\prod^{n-1}_{i=0}(s+t^{q^i})}{\prod^n_{i=1}(1-t^{q^i-1})}.
\end{align*}
\end{thmc}

In the special case when $q=p$, Parts~(1) and (2) of Theorem~C
were established by Mitchell-Priddy \cite{MP} and Mitchell
\cite{Mi} respectively, via a topological approach which involves
Steenrod algebra in an essential way. The question of generalizing
to a general $q$ was raised explicitly by Kuhn and Mitchell
\cite{KM}, but it was warned back then that a direct
generalization of their algebraic method could be too complicated,
as it would require one to establish our Theorem~B (which is in
turn built on Theorem~\ref{thm:invparabolic}) first. With
Theorems~B at our disposal, we indeed prove Theorem~C for a
general $q$ following the algebraic approach of \cite{KM}, which
uses a modular version of a theorem of Curtis \cite{Cu} (also see
Solomon \cite{So}).


\subsection{}

From the viewpoint of representation theory, all the works about
various generalizations of Dickson invariants and the Steinberg
module multiplicity are providing very interesting yet limited
answers to the problem of understanding the $GL_n(q)$-module
structure of the symmetric algebra $\sym$. In a separate
publication \cite{WW}, we will readdress such a problem from a completely
different approach which employs the deep connections between
finite groups of Lie type and algebraic groups.
In particular, we will give a new proof of Theorem~C.

\subsection{}
The paper is organized as follows. In Section~\ref{sec:DM inv}, we
review some needed results from \cite{Mui, KM, MT}, and rework
parts of \cite{MT} for a general $q$. Proofs of Theorems~A and B
and the applications to generalization of Crabb's work are given
in Section~\ref{sec:para.inv.}. Theorem~C is then proved in
Section~\ref{sec:Stmult}.

\vspace{.2cm}

{\bf Acknowledgments.} We are grateful to Nick Kuhn, from whom we
first learned about the interests of topologists in Dickson
invariants and Steinberg modules, for stimulating discussions and
generously sharing his expertise. The research of the second
author is partially supported by NSF grant DMS-0800280.


\section{Dickson-Mui invariants and generalizations}\label{sec:DM inv}
In this section, we will recall the work by Dickson and
Kuhn-Mitchell on invariants in $\sym$ and then the work of Mui on
invariants in $\sym\otimes\wedg$. We then modify the approach of
Minh-T\`ung to make it valid for a general $q$. Throughout this
section, the symmetric algebra $\sym$ and the exterior algebra
$\wedg$ will be identified with $\F_q[x_1,\ldots,x_n]$ and
$E[y_1,\ldots,y_n]$, respectively.


\subsection{The invariants of Dickson and Kuhn-Mitchell in $\sym$.}
\label{subsec:Dickinv}

For $1\leq m\leq n$, we define $V_m$ and $L_m$ by letting
\begin{align*}
V_m=V_m(x_1,\ldots,x_m)
 &=\prod_{c_1,\ldots,c_{m-1}\in\F_q}(c_1x_1+\cdots+c_{m-1}x_{m-1}+x_m)
   \\
L_m=L_m(x_1,\ldots,x_m)
 &=V_1V_2\cdots V_m
=\prod^m_{k=1}\prod_{c_1,\ldots,c_{k-1}\in\F_q}
 (c_1x_1+\cdots+c_{k-1}x_{k-1}+x_k).
\end{align*}
By~\cite{D} (cf. \cite[Lemma~2.3]{Mui}), we have
\begin{align*}
L_m=\left|
{\begin{array}{cccc}
 x_1& x_2&\cdots &x_m\\
 x_1^q &  x_2^{q}&\cdots& x_m^{q} \\
 \vdots&\vdots&\vdots&\vdots\\
 x_1^{q^{m-1}}&x_2^{q^{m-1}}&\cdots& x_m^{q^{m-1}}
 \end{array} }
\right|.
\end{align*}
For $0\leq k\leq m$, define $L_{m,k}$ and $Q_{m,k}$ by letting
\begin{align}
L_{m,k}&=
\left|
{\begin{array}{cccc}
 x_1& x_2&\cdots &x_m\\
 x_1^q &  x_2^{q}&\cdots& x_m^{q} \\
 \vdots&\vdots&\vdots&\vdots\\
 \widehat{x_1^{q^k}}&\widehat{x_2^{q^{k}}}&\cdots&\widehat{x_m^{q^{k}}}\\
 \vdots&\vdots&\vdots&\vdots\\
 x_1^{q^m}&x_2^{q^m}&\cdots& x_m^{q^m}
 \end{array} }
\right|,\notag\\
Q_{m,k}&=Q_{m,k}(x_1,\ldots,x_m) = L_{m,k}/L_m,\label{eqn:Dickson}
\end{align}
where the hat $\, \widehat{ } \,$ means the omission of the given
term as usual. It follows from~\cite{D} (cf.
\cite[Proposition~2.6]{Mui}) that $Q_{m,k}$ can be computed
recursively by the following relations:
\begin{align}
Q_{m,0}&=L_m^{q-1},\label{eqn:relQL}\\
Q_{m,k}&=Q_{m-1,k}V_m^{q-1}+Q_{m-1,k-1}^{q}, \notag\\
V_m
&=x_m^{q^{m-1}}+\sum^{m-2}_{k=0}(-1)^{m-1-k}Q_{m-1,k}x_m^{q^k}.
\label{eqn:Vm}
\end{align}

According to Dickson \cite{D}, both subalgebras of invariants over
${SL_n(q)}$ and over ${GL_n(q)}$ in $\F_q[x_1,\ldots,x_n]$ are
polynomial algebras, and moreover,
\begin{eqnarray*}
\begin{split}
\F_q[x_1,\ldots,x_n]^{SL_n(q)}&=\F_q[L_n,Q_{n,1},\ldots,Q_{n,n-1}], \\
\F_q[x_1,\ldots,x_n]^{GL_n(q)}&=\F_q[Q_{n,0},\ldots,Q_{n,n-1}].
\end{split}
\end{eqnarray*}

Let $I=(n_1,n_2,\ldots,n_{\ell})$ be a composition of $n$, i.e.,
$\sum_jn_j=n$, whose parts $n_j$ are  assumed to be all positive
throughout the paper. Denote by $P_I$ the standard parabolic
subgroup of the form
\begin{align}  \label{parabolic}
P_{I}=\left(
{\begin{array}{cccc}
 GL_{n_1}(q) & *&\cdots &* \\
 0 &  GL_{n_2}(q)&\cdots& * \\
 \vdots&\vdots&\vdots&\vdots\\
 0&0&\cdots& GL_{n_{\ell}}(q)
 \end{array} }
\right)
\end{align}
and set
\begin{equation}  \label{eq:mi}
m_0=0, \qquad m_i=\sum^i_{j=1}n_j \quad (1\leq i\leq \ell).
\end{equation}
For $1\leq j\leq n_i$, recalling $Q_{m,k}$ from
(\ref{eqn:Dickson}), we define
\begin{align*}
v_{i,j} &=\prod_{c_1,\ldots,c_{m_{i-1}}\in\F_q}(c_1x_1
+\cdots+c_{m_{i-1}}x_{m_{i-1}}+x_{m_{i-1}+j}),
 \\
q_{i,n_i-j}&=Q_{n_i,n_i-j}(v_{i,1},\ldots,v_{i,n_i}).
\end{align*}

Recall the Hilbert series of a graded space $W^\bullet =\oplus_i
W^i$ is by definition the generating function  $H (W^\bullet, t)
=\sum_i t^i \dim W^i$.

\begin{thm}\label{thm:KM-He}(\cite[Theorem~2.2]{KM}, \cite[Theorem~1.4]{He})
The subalgebra $\sym^{P_I}$ of $P_I$-invariants in $\sym$ is a
polynomial algebra on the generators $q_{i,n_i-j}$ of degree
$q^{m_i}-q^{m_i-j}$ with $1\leq i\leq \ell, 1\leq j\leq n_i$.
Moreover, the Hilbert series of $\sym^{P_I}$ is
\begin{align*}
H\big(\sym^{P_I},t \big)
=\frac{1}{\prod^{\ell}_{i=1}\prod^{n_i}_{j=1}(1-t^{q^{m_i}-q^{m_i-j}})}.
\end{align*}
\end{thm}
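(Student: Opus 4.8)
The plan is to prove this by induction on the number of parts $\ell$ of the composition $I=(n_1,\ldots,n_\ell)$, essentially the strategy of \cite{KM}. When $\ell=1$ we have $P_{(n)}=GL_n(q)$ and $v_{1,j}=x_j$ (the product defining $v_{1,j}$ is empty since $m_0=0$), so the assertion is exactly Dickson's theorem recalled above: $\sym^{GL_n(q)}=\F_q[q_{1,0},\ldots,q_{1,n-1}]$ with $q_{1,n-j}=Q_{n,n-j}(x_1,\ldots,x_n)$ of degree $q^n-q^{n-j}$.

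For the inductive step, write $I=(n_1,I')$ with $I'=(n_2,\ldots,n_\ell)$ a composition of $n-n_1$, and observe that $P_I\subseteq P_{(n_1,n-n_1)}$. Let $U$ be the unipotent radical of $P_{(n_1,n-n_1)}$; then $U$ is normal in $P_I$ and $P_I=U\rtimes\big(GL_{n_1}(q)\times P_{I'}\big)$, where $P_{I'}\subseteq GL_{n-n_1}(q)$ is the standard parabolic for $I'$ acting on the last $n-n_1$ variables. Hence $\sym^{P_I}=\big(\sym^{U}\big)^{GL_{n_1}(q)\times P_{I'}}$, and the crux of the argument is to compute $\sym^{U}$. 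Now $U$ fixes $x_1,\ldots,x_{n_1}$ and translates each remaining variable $x_{n_1+j}$, independently of the others, by an arbitrary element of the $\F_q$-subspace $W=\langle x_1,\ldots,x_{n_1}\rangle$. Writing $\gamma_W(y)=\prod_{w\in W}(y+w)$, which is a monic $\F_q$-linearized polynomial in $y$ of degree $q^{n_1}$ over the normal domain $\F_q[x_1,\ldots,x_{n_1}]$, a routine degree/integral-closure argument gives
\begin{align*}
\sym^{U}=\F_q[x_1,\ldots,x_{n_1}]\otimes_{\F_q}\F_q\big[\gamma_W(x_{n_1+1}),\ldots,\gamma_W(x_n)\big],
\end{align*}
a polynomial algebra; the $\gamma_W(x_{n_1+j})$ are algebraically independent since specializing $x_1=\cdots=x_{n_1}=0$ sends $\gamma_W(x_{n_1+j})$ to $x_{n_1+j}^{q^{n_1}}$.

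Next I would identify the induced $GL_{n_1}(q)\times P_{I'}$-action on $\sym^{U}$: since $\gamma_W$ is $\F_q$-linear with kernel $W$, the factor $GL_{n_1}(q)$ acts on $\F_q[x_1,\ldots,x_{n_1}]$ through its standard representation and fixes every $\gamma_W(x_{n_1+j})$, while $P_{I'}$ fixes $x_1,\ldots,x_{n_1}$ and acts on $\gamma_W(x_{n_1+1}),\ldots,\gamma_W(x_n)$ through the standard action of $P_{I'}$ on $n-n_1$ variables. Therefore
\begin{align*}
\sym^{P_I}=\F_q[x_1,\ldots,x_{n_1}]^{GL_{n_1}(q)}\otimes_{\F_q}\F_q\big[\gamma_W(x_{n_1+1}),\ldots,\gamma_W(x_n)\big]^{P_{I'}}.
\end{align*}
Dickson's theorem makes the first factor the polynomial algebra on $q_{1,0},\ldots,q_{1,n_1-1}$ of degrees $q^{n_1}-q^{n_1-j}$, and the inductive hypothesis makes the second a polynomial algebra. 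To see that the generators of the second factor are exactly the claimed $q_{i,n_i-j}$ for $2\le i\le\ell$, I would use the linearized-polynomial composition identity $\gamma_{U'}=\gamma_{\gamma_W(U')}\circ\gamma_W$ for any $\F_q$-subspace $U'\supseteq W$, together with $\gamma_W\big(\langle x_1,\ldots,x_m\rangle\big)=\langle\gamma_W(x_{n_1+1}),\ldots,\gamma_W(x_m)\rangle$ for $m\ge n_1$; these show that $v_{i,c}=\gamma_{\langle x_1,\ldots,x_{m_{i-1}}\rangle}(x_{m_{i-1}+c})$ for $I$ coincides with the analogous polynomial built for $I'$ from the variables $\gamma_W(x_{n_1+1}),\ldots,\gamma_W(x_n)$, whence $Q_{n_i,n_i-j}$ of the latter variables equals $q_{i,n_i-j}$. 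Since each $\gamma_W(x_{n_1+a})$ has degree $q^{n_1}$ in the original $x$'s, the inductive degree $q^{m_i-n_1}-q^{m_i-n_1-j}$ of $q_{i,n_i-j}$ scales to $q^{m_i}-q^{m_i-j}$, and the Hilbert series formula is then immediate from $\sym^{P_I}$ being a polynomial algebra on generators of these degrees.

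The step I expect to be the main obstacle is the computation of $\sym^{U}$: this is the only genuinely non-formal input beyond Dickson's theorem and elementary group theory, and it is where one must verify that the invariants of the abelian unipotent radical on $\F_q[x_1,\ldots,x_n]$ are generated precisely by the orbit products $\gamma_W(x_{n_1+j})$ together with the fixed variables. A secondary, purely bookkeeping difficulty is the identification, via the linearized-polynomial composition identity above, of the generators produced by the induction with the specific elements $q_{i,n_i-j}$ named in the statement rather than with some other generating set of the same polynomial algebra.
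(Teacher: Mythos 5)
This theorem is not proved in the paper at all: it is quoted from Kuhn--Mitchell and Hewett, and your sketch is, in substance, a correct reconstruction of their argument rather than a genuinely new route. The decomposition $P_I=U\rtimes\big(GL_{n_1}(q)\times P_{I'}\big)$, with $U$ the unipotent radical of the two-block parabolic, is correct, and the two inputs you single out are indeed the only substantive ones and both go through: (i) $\F_q[x_1,\ldots,x_n]^{U}=\F_q\big[x_1,\ldots,x_{n_1},\gamma_W(x_{n_1+1}),\ldots,\gamma_W(x_n)\big]$ follows from exactly the integral-closure argument you indicate, since the right-hand side is a polynomial (hence normal) subring over which the whole ring is integral, and its fraction-field index is forced to be $|U|=q^{n_1(n-n_1)}$ by comparing with the Galois degree of the $U$-action; (ii) the identification of the generators produced by induction with the named $q_{i,n_i-j}$ only needs that $\gamma_W$ is a $q$-linearized polynomial with root space exactly $W$ (which is the content of \eqref{eqn:Vm}) together with the standard composition identity $\gamma_{U'}=\gamma_{\gamma_W(U')}\circ\gamma_W$ for $\F_q$-subspaces $U'\supseteq W$, and the linearity also justifies that $GL_{n_1}(q)$ fixes each $\gamma_W(x_{n_1+j})$ while $P_{I'}$ acts on them through its standard representation. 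The degree bookkeeping (each $\gamma_W(x_{n_1+a})$ has degree $q^{n_1}$, so the inductive degrees $q^{m_i-n_1}-q^{m_i-n_1-j}$ rescale to $q^{m_i}-q^{m_i-j}$) and the resulting Hilbert series are then immediate, so the proposal is a complete and correct plan, matching the proof in the cited sources that this paper relies on.
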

%


\subsection{Invariants of Mui in $\sym\otimes\wedg$.}

Let $A=(a_{ij})$ be a $n\times n$-matrix with entries in a
possibly noncommutative ring. The (row) determinant of $A$ is
defined as follows:
\begin{align*}
|A| =\det (A)
=\sum_{\sigma\in S_n} {\rm
sgn}(\sigma)a_{1\sigma(1)}a_{2\sigma(2)}\cdots a_{n\sigma(n)}.
\end{align*}

Suppose $1\leq j\leq m\leq n$ and let $(b_1,\ldots,b_j)$ be a
sequence of integers such that $0\leq b_1<\cdots<b_j\leq m-1$. We
define the element $M_{m;b_1,\ldots,b_j}$ in
$\F_q[x_1,\ldots,x_n]\otimes E[y_1,\ldots,y_n]$ by  the following
determinant of $m\times m$ matrix
\begin{align*}
M_{m;b_1,\ldots,b_j}
:=\frac{1}{j!}~
\left|
\begin{array}{cccc}
y_1&y_2&\cdots&y_m\\
y_1&y_2&\cdots&y_m\\
\vdots&\vdots&\vdots&\vdots\\
y_1&y_2&\cdots&y_m\\
x_1&x_2&\cdots&x_m\\
\vdots&\vdots&\vdots&\vdots\\
\widehat{x_1^{q^{b_1}}}&\widehat{x_2^{q^{b_1}}}&\cdots&\widehat{x_m^{q^{b_1}}}\\
\vdots&\vdots&\vdots&\vdots\\
\widehat{x_1^{q^{b_j}}}&\widehat{x_2^{q^{b_j}}}&\cdots&\widehat{x_m^{q^{b_j}}}\\
\vdots&\vdots&\vdots&\vdots\\
x_1^{q^{m-1}}&x_2^{q^{m-1}}&\cdots&x_m^{q^{m-1}}
\end{array}
\right|,
\end{align*}
where the first $j$ rows $(y_1, y_2, \ldots, y_m)$ are repeated.
Note that, for $ g\in GL_m(q)$,
\begin{align*}
g\cdot M_{m;b_1,\ldots,b_j}=\det(g)\ M_{m;b_1,\ldots,b_j}.
\end{align*}

We now recall the main results in \cite[Chapter I]{Mui} (valid
over $\F_q$ though only formulated over $\F_p$ therein), which
will be used in the modified proof of results of \cite{MT} for a
general finite field $\F_q$ in the next subsection.

For two integer sequences $(b_1,\ldots,b_j)$ and
$(c_1,\ldots,c_j)$, we say $(c_1,\ldots,c_j)>(b_1,\ldots,b_j)$ if
there exists an integer $1\leq k\leq j$ such that $c_k>b_k$ and
$c_i=b_i$ for $k+1\leq i\leq j$. Denote by $U_n(q)$ the subgroup
of $GL_n(q)$ consisting of all uni-uppertriangular matrices.

\begin{lem} (cf. \cite[Proposition~4.5]{Mui})\label{lem:Mui4.5}
Suppose that $1\leq j\leq m\leq n$ and $0\leq b_1<\cdots<b_j\leq m-1$.
Then we have
\begin{align*}
M_{m;b_1,\ldots,b_j}
=(-1)^{\frac{j(j-1)}{2}}M_{m;b_1}M_{m;b_2}\cdots M_{m;b_j}/L_m^{j-1}.
\end{align*}
\end{lem}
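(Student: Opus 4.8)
The plan is to expand both sides of the asserted identity along their $y$-rows and to match coefficients via the classical Jacobi identity for minors of the adjugate of the Moore matrix $X$ whose $m$ rows are $(x_1^{q^i},\dots,x_m^{q^i})$, $0\le i\le m-1$, so that $\det X=L_m$.

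First I would Laplace-expand the $m\times m$ row-determinant defining $M_{m;b_1,\dots,b_j}$ along its first $j$ rows, all of which equal $(y_1,\dots,y_m)$. For a column set $C=\{a_1<\cdots<a_j\}$ the associated $j\times j$ row-minor is the row-determinant of the matrix all of whose rows equal $(y_{a_1},\dots,y_{a_j})$; since the $y_i$ anticommute and square to zero, each of its $j!$ permutation terms equals $y_{a_1}\cdots y_{a_j}$ (the permutation sign appearing squared), so this row-minor is exactly $j!\,y_{a_1}\cdots y_{a_j}$ and is killed by the prefactor $\tfrac1{j!}$. As the $x$'s are central, the Laplace expansion is valid in the (only partially noncommutative) ring and gives
\[
M_{m;b_1,\dots,b_j}=\sum_{1\le a_1<\cdots<a_j\le m}(-1)^{\binom{j+1}{2}+\sum_k a_k}\,y_{a_1}\cdots y_{a_j}\cdot\Delta(a_1,\dots,a_j),
\]
where $\Delta(a_1,\dots,a_j)$ denotes the $(m-j)\times(m-j)$ minor of $X$ obtained by deleting the rows $x^{q^{b_1}},\dots,x^{q^{b_j}}$ and the columns $a_1,\dots,a_j$.

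Next I would expand each single factor $M_{m;b_i}=\sum_a(-1)^{1+a}y_a\,\Delta_i(a)$ along its unique $y$-row, with $\Delta_i(a)$ the minor of $X$ deleting row $x^{q^{b_i}}$ and column $a$. Multiplying the $j$ factors and again using that distinct $y$'s anticommute while coincident ones vanish, I would group the product by the underlying $j$-subset of columns and recognize the alternating sum over the $j!$ column-to-factor assignments as a $j\times j$ determinant, obtaining
\[
M_{m;b_1}\cdots M_{m;b_j}=\sum_{1\le a_1<\cdots<a_j\le m}(-1)^{j+\sum_k a_k}\,y_{a_1}\cdots y_{a_j}\cdot\det\big(\Delta_i(a_k)\big)_{1\le i,k\le j}.
\]
The two expansions are then linked by writing each $\Delta_i(a_k)$ as a signed entry of $\operatorname{adj}(X)$ and invoking Jacobi's identity $\det\big(\operatorname{adj}(X)[I;J]\big)=(-1)^{s(I)+s(J)}(\det X)^{|I|-1}\det\big(X[J^c;I^c]\big)$ with $I=\{a_1,\dots,a_j\}$ and $J$ the set of row-positions of $x^{q^{b_1}},\dots,x^{q^{b_j}}$: the several sign factors cancel exactly and one is left with $\det\big(\Delta_i(a_k)\big)_{i,k}=L_m^{\,j-1}\,\Delta(a_1,\dots,a_j)$. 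Substituting this into the product expansion and comparing it term by term with the first display gives $M_{m;b_1,\dots,b_j}=(-1)^{\binom{j+1}{2}-j}\,M_{m;b_1}\cdots M_{m;b_j}/L_m^{\,j-1}$, and since $\binom{j+1}{2}-j=\binom j2=\tfrac{j(j-1)}2$ this is precisely the claim. (Jacobi's identity is applied over the fraction field of $\F_q[x_1,\dots,x_n]$, where $L_m\neq 0$; both sides being polynomials, the identity then holds identically. An alternative is to induct on $j$ via a Sylvester-type three-term relation among these determinants, but the route above seems most transparent.)

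I expect the genuine work to be purely the sign bookkeeping: combining the Laplace sign $(-1)^{s(R)+s(C)}$, the sorting signs $\operatorname{sgn}(\tau)$ produced by reordering the products $y_{a_{\tau(1)}}\cdots y_{a_{\tau(j)}}$, and the two signs in Jacobi's identity, and verifying that they collapse to exactly $(-1)^{j(j-1)/2}$. The determinant-theoretic content — Laplace expansion and Jacobi's minor identity — is entirely classical once the $y$-row expansions are in place, the only mild subtlety being that the ambient ring is noncommutative, which is harmless here since all the non-$y$ entries are central.
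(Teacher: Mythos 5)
Your proposal is correct, but there is nothing in the paper to compare it with: Lemma~\ref{lem:Mui4.5} is recalled from Mui (Proposition~4.5 of \cite{Mui}) without proof, so your argument is a self-contained derivation rather than a variant of an in-paper one. The route you choose is sound, and the one point you defer --- the sign bookkeeping --- does collapse exactly as you predict. Indeed, writing $\Delta_i(a_k)=(-1)^{(b_i+1)+a_k}\,\mathrm{adj}(X)_{a_k,\,b_i+1}$ and pulling the factors $(-1)^{(b_i+1)}$ and $(-1)^{a_k}$ out of the rows and columns of the $j\times j$ determinant produces precisely the sign $(-1)^{\sigma(A)+\sigma(J)}$ (with $A=\{a_1,\dots,a_j\}$, $J=\{b_1+1,\dots,b_j+1\}$) that Jacobi's identity carries, so $\det\big(\Delta_i(a_k)\big)_{i,k}=L_m^{\,j-1}\det X[J^c;A^c]$ with no residual sign; comparing with your Laplace expansion of $M_{m;b_1,\dots,b_j}$ then yields the exponent $\binom{j+1}{2}-j=\binom{j}{2}$, in agreement with the stated $(-1)^{j(j-1)/2}$ (and with the smallest nontrivial check $m=j=2$, $M_{2;0,1}=y_1y_2=-M_{2;0}M_{2;1}/L_2$). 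Your treatment of the two noncommutative points is also right: the multi-row Laplace expansion is legitimate because the $y$-entries occupy the first rows of the row-determinant and all remaining entries are central, and the repeated-row $j\times j$ block contributes $j!\,y_{a_1}\cdots y_{a_j}$, cancelling the prefactor $1/j!$. Two cosmetic remarks: Jacobi's identity is a polynomial identity valid over any commutative ring, so the passage to the fraction field is a convenience rather than a necessity; and it is cleaner to state the conclusion in the denominator-free form $L_m^{\,j-1}M_{m;b_1,\ldots,b_j}=(-1)^{j(j-1)/2}M_{m;b_1}\cdots M_{m;b_j}$, which shows in particular that the product on the right is divisible by $L_m^{\,j-1}$, as the statement implicitly requires.
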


\begin{lem}(cf. \cite[Lemma~5.1]{Mui})\label{lem:Mui5.1}
Suppose that $1\leq j\leq m\leq n$ and $0\leq b_1<\cdots<b_j\leq
m-1$. The following holds:
\begin{align*}
M_{m;b_1,\ldots,b_j}V_{m+1}\cdots V_n
=M_{n;b_1,\ldots,b_j}
+\sum_{c_1,\ldots,c_j}M_{n;c_1,\ldots,c_j}\cdot h_{c_1,\ldots,c_j},
\end{align*}
where the summation is over the sequences $(c_1,\ldots,c_j)$
such that $(c_1,\ldots,c_j)>(b_j-j+1,b_j-j+2,\ldots,b_j)$
and $h_{c_1,\ldots,c_j}\in\F_q[x_1,\ldots,x_n]^{U_n(q)}$.
\end{lem}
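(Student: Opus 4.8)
The plan is to reduce the statement to the single-index case ($j=1$) and then promote it to general $j$ using the factorization in Lemma~\ref{lem:Mui4.5}. For $j=1$: the element $M_{m;b}$ is, up to sign, the Dickson-type minor $L_{m,b}$ with an extra repeated top row of $y$'s, so expanding $M_{m;b}$ along that first row gives $M_{m;b}=\sum_{i=1}^{m}(-1)^{i+1}y_i\,(\text{minor})$, where the minors are the Dickson cofactors appearing in the Laplace expansion of $L_{m+1,b}$ (resp.\ $L_{n,b}$) along its last columns. Concretely I would multiply $M_{m;b}$ by $V_{m+1}\cdots V_n$ and use the already-recorded recursions \eqref{eqn:relQL}--\eqref{eqn:Vm} for the $Q_{m,k}$, $V_m$ together with the determinantal identities of Section~\ref{subsec:Dickinv}; the product $M_{m;b}V_{m+1}\cdots V_n$ then expands as $M_{n;b}$ plus a $\F_q$-linear combination of the $M_{n;c}$ with $c>b$, and the coefficients are built from $Q$'s and $V$'s, all of which lie in $\F_q[x_1,\ldots,x_n]^{U_n(q)}$ (indeed in $\sym^{P_I}$ for suitable $I$, but $U_n(q)$-invariance is all that is claimed). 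This is essentially the argument of \cite[Lemma~5.1]{Mui} for the prime field; the point is that nothing in the determinant manipulations or in the recursions \eqref{eqn:relQL}--\eqref{eqn:Vm} uses $q=p$, so the same computation runs verbatim over $\F_q$.

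For general $j$, I would apply Lemma~\ref{lem:Mui4.5} to write
\begin{align*}
M_{m;b_1,\ldots,b_j}V_{m+1}\cdots V_n
=(-1)^{\frac{j(j-1)}{2}}\,\frac{\bigl(M_{m;b_1}V_{m+1}\cdots V_n\bigr)\cdots\bigl(M_{m;b_j}V_{m+1}\cdots V_n\bigr)}{(L_m V_{m+1}\cdots V_n)^{j-1}},
\end{align*}
then substitute the $j=1$ expansion into each numerator factor, use $L_mV_{m+1}\cdots V_n=L_n$, and re-assemble via Lemma~\ref{lem:Mui4.5} in the other direction. Multiplying out, the leading term $M_{n;b_1}\cdots M_{n;b_j}/L_n^{j-1}=(-1)^{\frac{j(j-1)}{2}}M_{n;b_1,\ldots,b_j}$ reproduces $M_{n;b_1,\ldots,b_j}$, and every other term is a product $M_{n;c'_1}\cdots M_{n;c'_j}/L_n^{j-1}$ (times a $U_n(q)$-invariant coefficient) in which at least one index has strictly increased. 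The combinatorial bookkeeping needed is exactly the verification that, after sorting each resulting index tuple into increasing order and discarding tuples with a repeated entry (which vanish, being determinants with two equal rows), every surviving tuple $(c_1,\ldots,c_j)$ satisfies $(c_1,\ldots,c_j)>(b_j-j+1,\ldots,b_j)$ in the order defined just before the lemma; the shift by $b_j-j+1,\ldots,b_j$ arises because the only way to keep all $j$ indices distinct and $\le n-1$ while forcing the largest original index $b_j$ up is to push the smaller ones up past the packed configuration $(b_j-j+1,\ldots,b_j)$.

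The main obstacle I anticipate is precisely this last combinatorial step: controlling what happens when a product of expanded single-index terms produces a tuple with entries out of order or with collisions, and checking that every genuinely new tuple lands strictly above $(b_j-j+1,\ldots,b_j)$ rather than merely above $(b_1,\ldots,b_j)$. Handling it cleanly probably requires an auxiliary lemma on how the order interacts with the multilinear expansion—essentially tracking, for each monomial in the $V$'s, which "slot" its exponent $q^c$ lands in after antisymmetrization. The rest (the $j=1$ determinant identity, the invariance of the coefficients, the reduction via Lemma~\ref{lem:Mui4.5}) is routine bookkeeping with the recursions already displayed, and carries over from \cite{Mui} with no essential change once one checks that Lemmas~\ref{lem:Mui4.5} and the recursions hold over an arbitrary $\F_q$—which they do.
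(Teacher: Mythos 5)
This lemma is not proved in the paper at all: it is quoted from \cite[Lemma~5.1]{Mui} (with the observation that Mui's algebraic argument is characteristic-free and so valid over any $\F_q$), so what your sketch must supply is essentially Mui's argument itself. As written, it does not. The crux of the statement is the precise lower bound on the new tuples, namely $(c_1,\ldots,c_j)>(b_j-j+1,\ldots,b_j)$, which in the order defined before the lemma is equivalent (for increasing tuples) to the single condition $c_j>b_j$. Your reduction feeds the $j=1$ case, in the form ``$M_{m;b_i}V_{m+1}\cdots V_n=M_{n;b_i}+\sum_{c>b_i}M_{n;c}h_c$,'' into the factorization of Lemma~\ref{lem:Mui4.5}; but with only $c>b_i$ known, the cross terms include, for example, the term in which $b_1$ is raised to some $c$ with $b_1<c<b_j$ while every other factor contributes its leading index $b_i$. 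After sorting, that tuple has largest entry equal to $b_j$, which the lemma forbids. So either such terms cancel (you give no mechanism for this) or the single-index input must be sharpened to say that all new indices are $\geq m$ (hence $>b_j$); the latter is what actually happens, e.g. $M_{2;0}V_3=M_{3;0}-Q_{2,0}M_{3;2}$, where only the top index $c=2=m$ appears and no $M_{3;1}$ term occurs. You neither state nor prove this stronger fact, and you explicitly flag the resulting ``combinatorial bookkeeping'' as an unresolved obstacle requiring an auxiliary lemma. Since that bookkeeping \emph{is} the content of the lemma, the proposal is a plan with the decisive step missing, not a proof.

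Two further points. First, even the $j=1$ base case is only asserted (``the product then expands as $M_{n;b}$ plus a combination of $M_{n;c}$ with $c>b$''); proving it already requires the kind of one-step analysis you are deferring, most naturally an induction on $n-m$ using \eqref{eqn:Vm} and the vanishing $V_{m+1}(x_1,\ldots,x_m,v)=0$ for $v$ in the $\F_q$-span of $x_1,\ldots,x_m$, which shows the one-step expansion $M_{m;b_1,\ldots,b_j}V_{m+1}=M_{m+1;b_1,\ldots,b_j}+\sum M_{m+1;c_1,\ldots,c_j}(\cdot)$ only creates tuples with top index exactly $m$; iterating this gives the lemma directly and renders the detour through Lemma~\ref{lem:Mui4.5} unnecessary. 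Second, your overall framing point is correct and agrees with the paper: nothing in these determinant manipulations or in \eqref{eqn:relQL}--\eqref{eqn:Vm} uses $q=p$, so once a complete argument is in hand it transfers verbatim to general $q$; but that observation does not substitute for the missing step.
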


\begin{lem}\cite[Lemma~5.2]{Mui}\label{lem:Mui5.2}
Fix $1\leq j\leq n$ and let
\begin{align*}
f=\sum^n_{m=j}\sum_{0\leq b_1<\cdots<b_j= m-1}
M_{m;b_1,\ldots,b_j}\cdot g_{b_1,\ldots,b_j}(x_1,\ldots,x_n).
\end{align*}
Then $f=0$ if and only if all $g_{b_1,\ldots,b_j} =0$.
\end{lem}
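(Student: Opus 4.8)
The plan is to prove the nontrivial direction: if $f=0$ then every $g_{b_1,\ldots,b_j}=0$. First I would reorganize the sum so as to identify a ``leading term.'' For each fixed $m$ with $j\le m\le n$, the sequences $(b_1,\ldots,b_j)$ with $0\le b_1<\cdots<b_j=m-1$ are indexed (after dropping the forced last entry $b_j=m-1$) by the $(j-1)$-element subsets $\{b_1,\ldots,b_{j-1}\}$ of $\{0,1,\ldots,m-2\}$; order these lexicographically in the reverse-dictionary sense used just before Lemma~2.10 of the excerpt. I would then isolate the term with $m=n$ and the largest such sequence, namely $(b_1,\ldots,b_j)=(n-j, n-j+1,\ldots, n-1)$, for which $M_{n;n-j,\ldots,n-1}$ is (up to sign) $M_{n;n-1}M_{n;n-2}\cdots M_{n;n-j}/L_n^{j-1}$ by Lemma~\ref{lem:Mui4.5}.

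The key step is to use Lemma~\ref{lem:Mui5.1}: each $M_{m;b_1,\ldots,b_j}V_{m+1}\cdots V_n$ equals $M_{n;b_1,\ldots,b_j}$ plus a $\F_q[x]^{U_n(q)}$-combination of $M_{n;c_1,\ldots,c_j}$ with $(c_1,\ldots,c_j)$ strictly larger than $(b_j-j+1,\ldots,b_j)$. Multiplying the hypothetical relation $f=0$ through by $V_{j+1}\cdots V_n$ (when $m<n$) — more precisely, handling one value of $m$ at a time, starting from $m=n$ and the top sequence and inducting downward — one rewrites everything in terms of the $M_{n;c_1,\ldots,c_j}$ with $c_j = $ (varying), and reads off, via the sign-change identity, that the whole expression becomes a single polynomial determinant-type relation in $\F_q[x_1,\ldots,x_n]$. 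Concretely, after clearing $L_n^{j-1}$ one obtains an identity of the form $\sum_{(c_1,\ldots,c_j)} M_{n;c_1,\ldots,c_j}\cdot \widetilde g_{c_1,\ldots,c_j} = 0$ where, by Lemma~\ref{lem:Mui4.5}, $M_{n;c_1,\ldots,c_j}L_n^{j-1} = \pm\, M_{n;c_1}\cdots M_{n;c_j}$ and each $M_{n;b}$ is itself a genuine determinant (with one $y$-row) that is nonzero and, as $b$ ranges over $0,\ldots,n-1$, these are $\F_q[x]$-linearly independent after dividing by $L_n$ — this last fact is essentially Mui's analysis of the $M_{n;b}$ as a ``partial exterior'' basis. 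Linear independence of the products $M_{n;c_1}\cdots M_{n;c_j}/L_n^{j-1}$ over $\F_q[x_1,\ldots,x_n]$ then forces each $\widetilde g$, hence each $g$, to vanish.

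I expect the main obstacle to be the bookkeeping in the downward induction: showing that when one substitutes the expansions from Lemma~\ref{lem:Mui5.1} for the terms with $m<n$, the ``new'' contributions $h_{c_1,\ldots,c_j}\in\F_q[x]^{U_n(q)}$ cannot conspire to cancel the leading term coming from the top sequence at $m=n$. The point is that the strict inequality $(c_1,\ldots,c_j) > (b_j-j+1,\ldots,b_j)$ in Lemma~\ref{lem:Mui5.1} is with respect to the \emph{same} order used to single out the leading term, so a term of multi-index $(b_1,\ldots,b_j)$ can only feed into strictly higher multi-indices; hence the coefficient of the maximal $M_{n;c_1,\ldots,c_j}$ receives a contribution \emph{only} from the maximal $g$, giving $g_{n-j,\ldots,n-1}=0$, and then one repeats with the next-to-maximal index. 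Making the ordering on pairs $(m,(b_1,\ldots,b_j))$ explicit and verifying that Lemma~\ref{lem:Mui5.1} respects it is the technical heart; once that is set up, the linear-independence input (which is standard from the determinant description of the $M_{n;b}$ and which we already invoked implicitly via Lemma~\ref{lem:Mui4.5}) finishes the argument.
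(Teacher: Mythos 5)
The paper does not actually prove this lemma: it is quoted verbatim from Mui's paper (his Lemma 5.2), so the only thing to assess is your argument itself. Your overall plan---multiply the relation $f=0$ by $V_{j+1}\cdots V_n$, absorb the factor $V_{j+1}\cdots V_m$ into the coefficient of each level-$m$ term, expand via Lemma~\ref{lem:Mui5.1}, and then use linear independence of the $M_{n;c_1,\ldots,c_j}$ over $\F_q[x_1,\ldots,x_n]$---can be made to work, but the step you describe as the technical heart is inverted. In Lemma~\ref{lem:Mui5.1} the spill-over indices satisfy $(c_1,\ldots,c_j)>(b_j-j+1,\ldots,b_j)$, and for a strictly increasing sequence this forces $c_j>b_j=m-1$ (if the discrepancy occurred at some $k<j$ one would need $c_k>b_j-j+k$ while $c_k<c_{k+1}=b_j-j+k+1$). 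So a level-$m$ term feeds into its own index plus indices with strictly larger last entry. Consequently the coefficient of the \emph{maximal} index $M_{n;n-j,\ldots,n-1}$ is the one contaminated by potentially every unknown $g_{b_1,\ldots,b_j}$, and your claim that it ``receives a contribution only from the maximal $g$'' is false; the proposed downward induction fails at its first step. The elimination must run the other way: the coefficient of the \emph{minimal} index $M_{n;0,1,\ldots,j-1}$ is exactly $g_{0,\ldots,j-1}$ (for $m=j$ the absorbed factor is an empty product), so that coefficient vanishes first, and one then inducts upward on $b_j$ (equivalently on $m$), at each stage dividing by the nonzero polynomial $V_{j+1}\cdots V_m$; equivalently, the transition matrix from the $g$'s to the level-$n$ coefficients is triangular with nonzero diagonal entries, hence injective.

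A second point needs attention to avoid circularity: the independence you invoke at level $n$---that the $M_{n;c_1,\ldots,c_j}$ with $0\leq c_1<\cdots<c_j\leq n-1$ are linearly independent over $\F_q[x_1,\ldots,x_n]$---is precisely the $m=n$ case of the lemma being proved, so it cannot be waved off as ``Mui's analysis.'' It does admit the direct proof you gesture at: expanding along the $y$-row, the coefficients of $M_{n;b}$ with respect to $y_1,\ldots,y_n$ are, up to sign, the cofactors of the matrix $(x_i^{q^c})_{0\leq c\leq n-1,\,1\leq i\leq n}$, whose determinant is $L_n\neq 0$; hence $M_{n;0},\ldots,M_{n;n-1}$ are linearly independent over $\F_q(x_1,\ldots,x_n)$, and by Lemma~\ref{lem:Mui4.5} the elements $L_n^{j-1}M_{n;c_1,\ldots,c_j}=\pm M_{n;c_1}\cdots M_{n;c_j}$ are independent because $j$-fold wedge products of linearly independent vectors are. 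With the induction direction reversed and this independence made explicit, your argument closes; as written, however, the key deduction does not go through.
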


\begin{thm}(cf. \cite[Theorem~4.8, Theorem~4.17, Theorem~5.6]{Mui})
\label{thm:Muithms}
We have
\begin{align*}
(\sym\otimes\wedg)^{SL_n(q)}
=~&\F_q[L_n,Q_{n,1},\ldots,Q_{n,n-1}]\\
&\oplus\sum^n_{j=1}\sum_{0\leq b_1<\cdots<b_j\leq n-1}
M_{n;b_1,\ldots,b_j}\F_q[L_n,Q_{n,1},\ldots,Q_{n,n-1}], \\
(\sym\otimes\wedg)^{GL_n(q)}
=~&\F_q[Q_{n,0,}Q_{n,1},\ldots,Q_{n,n-1}]\\
&\oplus\sum^n_{j=1}\sum_{0\leq b_1<\cdots<b_j\leq n-1}
M_{n;b_1,\ldots,b_j}L_n^{q-2}\F_q[Q_{n,0},Q_{n,1},\ldots,Q_{n,n-1}],  \\
(\sym\otimes\wedg)^{U_n(q)}
=~&\F_q[V_1,\ldots,V_n]\\
&\oplus\sum^n_{j=1}\sum^n_{m=j}\sum_{0\leq b_1<\cdots<b_j= m-1}
M_{m;b_1,\ldots,b_j}\F_q[V_1,\ldots,V_n].
\end{align*}
\end{thm}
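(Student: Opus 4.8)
The plan is to handle the three statements together, filtering by exterior degree and bootstrapping from the $U_n(q)$-case. Write $\sym\otimes\wedg=\bigoplus_{j=0}^{n}\big(\sym\otimes\wedge^j V\big)$. Each $M_{m;b_1,\ldots,b_j}$ lies in $\sym\otimes\wedge^j V$ and involves only $x_1,\ldots,x_m$ and $y_1,\ldots,y_m$, so the noted identity $g\cdot M_{m;b_1,\ldots,b_j}=\det(g)\,M_{m;b_1,\ldots,b_j}$ for $g\in GL_m(q)$ shows that $M_{m;b_1,\ldots,b_j}$ is fixed by $U_n(q)$ (which acts on $x_1,\ldots,x_m,y_1,\ldots,y_m$ through a copy of $U_m(q)\subseteq SL_m(q)$) and that $M_{n;b_1,\ldots,b_j}$ is $SL_n(q)$-invariant. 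The exterior-degree-$0$ summands are $\sym^{U_n(q)}$, $\sym^{SL_n(q)}$ and $\sym^{GL_n(q)}$, given by Dickson's theorem recalled in Section~\ref{subsec:Dickinv} (and the case $I=(n)$ of Theorem~\ref{thm:KM-He} for $U_n(q)$), so only the summands with $j\ge1$ require work.

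First I would establish the $U_n(q)$-case. The displayed $\F_q[V_1,\ldots,V_n]$-span is contained in $(\sym\otimes\wedg)^{U_n(q)}$ by the semi-invariance just noted. For directness, fix $j\ge1$: Lemma~\ref{lem:Mui5.2} says that the $\sum_{m=j}^{n}\binom{m-1}{j-1}=\binom{n}{j}$ elements $M_{m;b_1,\ldots,b_j}$ with $b_j=m-1$ are linearly independent over $\sym$, a fortiori over $\F_q[V_1,\ldots,V_n]$. The substantive point is that every $f\in(\sym\otimes\wedge^j V)^{U_n(q)}$ lies in their $\F_q[V_1,\ldots,V_n]$-span, and I would prove this by induction on $n$: decompose $f$ according to the presence of $x_n$ and of $y_n$, apply the inductive hypothesis to the $U_{n-1}(q)$-invariant pieces (regarded as having coefficients in $\F_q[x_n]$), and promote the resulting $M_{m;b_1,\ldots,b_j}$ with $m\le n-1$ to level $n$ by means of the relation $M_{m;b_1,\ldots,b_j}V_{m+1}\cdots V_n=M_{n;b_1,\ldots,b_j}+\sum M_{n;c_1,\ldots,c_j}h_{c_1,\ldots,c_j}$ of Lemma~\ref{lem:Mui5.1} together with the recursion~\eqref{eqn:Vm} for $V_n$ in terms of the lower Dickson data, using Lemma~\ref{lem:Mui5.2} to pin down the coefficients at each stage. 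I expect this straightening to be the main obstacle; Lemmas~\ref{lem:Mui5.1} and~\ref{lem:Mui5.2} together with the Dickson recursion are precisely the machinery tailored to it. Summing over $j$ yields the $U_n(q)$-statement.

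Finally I would derive the $SL_n(q)$- and $GL_n(q)$-statements from the $U_n(q)$-case by localizing at $L_n=V_1\cdots V_n$, which is $SL_n(q)$-invariant. Over $\sym[L_n^{-1}]$ the products $V_{m+1}\cdots V_n$ become units, so Lemma~\ref{lem:Mui5.1} rewrites the basis $\{M_{m;b_1,\ldots,b_j}:b_j=m-1\}$ of $\big((\sym\otimes\wedge^j V)^{U_n(q)}\big)[L_n^{-1}]$ into $\{M_{n;b_1,\ldots,b_j}:0\le b_1<\cdots<b_j\le n-1\}$ (a triangular, hence invertible, change of basis with respect to the order on index sequences); thus any $f\in(\sym\otimes\wedge^j V)^{SL_n(q)}$ is an $\F_q[V_1,\ldots,V_n][L_n^{-1}]$-combination of the $M_{n;b_1,\ldots,b_j}$, and since the latter are $SL_n(q)$-invariant and linearly independent, the coefficients lie in $\sym^{SL_n(q)}[L_n^{-1}]=\F_q[L_n^{\pm1},Q_{n,1},\ldots,Q_{n,n-1}]$; a divisibility argument (clearing $L_n$ and invoking Lemma~\ref{lem:Mui4.5}) removes the negative powers of $L_n$, giving the $SL_n(q)$-statement. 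For $GL_n(q)$, use $L_n^{q-1}=Q_{n,0}$ to write $\F_q[L_n,Q_{n,1},\ldots,Q_{n,n-1}]=\bigoplus_{a=0}^{q-2}L_n^{a}\,\F_q[Q_{n,0},\ldots,Q_{n,n-1}]$; the matrix $h=\mathrm{diag}(\zeta,1,\ldots,1)$, with $\zeta$ a generator of $\F_q^{\times}$, generates $GL_n(q)$ together with $SL_n(q)$ and acts on $M_{n;b_1,\ldots,b_j}\,L_n^{a}\,r$ (with $r\in\F_q[Q_{n,0},\ldots,Q_{n,n-1}]$) by the scalar $\zeta^{1+a}$, so $GL_n(q)$-invariance forces $1+a\equiv0\pmod{q-1}$, i.e. $a=q-2$, which is exactly the factor $L_n^{q-2}$ in the formula. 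Granting the $U_n(q)$-straightening, these last two deductions are essentially bookkeeping with Lemmas~\ref{lem:Mui4.5} and~\ref{lem:Mui5.1}.
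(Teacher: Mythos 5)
First, a point of comparison: the paper does not actually prove Theorem~\ref{thm:Muithms}. It is recalled from \cite[Theorems~4.8, 4.17, 5.6]{Mui}, with the remark that Mui's algebraic proofs remain valid over a general $\F_q$; so your proposal has to be measured against Mui's original argument, which in fact runs in the opposite order to yours: the $SL_n(q)$- and $GL_n(q)$-statements are proved first (Mui, \S4) by a direct analysis resting on divisibility lemmas, and the $U_n(q)$-statement afterwards (\S5) using what are quoted here as Lemmas~\ref{lem:Mui5.1} and \ref{lem:Mui5.2}. Your reversed route ($U_n(q)$ first, then localization at $L_n$) is not unreasonable, and several of your intermediate claims are correct: the inclusions and semi-invariance of the $M$'s, the count $\sum_{m=j}^{n}\binom{m-1}{j-1}=\binom{n}{j}$, the unitriangularity of the change of basis furnished by Lemma~\ref{lem:Mui5.1} (the correction terms there indeed force $c_j>b_j$), and the eigenvalue argument with $h=\mathrm{diag}(\zeta,1,\ldots,1)$ reducing the $GL_n(q)$-statement to the $SL_n(q)$-statement.

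However, as a proof the proposal has two genuine gaps, and they sit exactly where the substance of Mui's theorem lies. (i) The spanning statement for $(\sym\otimes\wedge^j(V))^{U_n(q)}$: the induction you describe (``decompose $f$ according to the presence of $x_n$ and of $y_n$, apply the inductive hypothesis\dots'') is not yet an argument. After invoking the $U_{n-1}(q)$-case you must impose invariance under the last-column transvections and show that the dependence on $x_n,y_n$ assembles into $V_n$ and into the elements $M_{m;b_1,\ldots,b_j}$ with $b_j=m-1$; Lemmas~\ref{lem:Mui5.1} and \ref{lem:Mui5.2} provide straightening and linear independence of these elements, but not this invariance analysis, which is the crux and which you yourself flag as ``the main obstacle'' without resolving it. (ii) In the $SL_n(q)$-step, after localizing at $L_n$ you get coefficients in $\F_q[L_n^{\pm1},Q_{n,1},\ldots,Q_{n,n-1}]$, and returning to non-negative powers of $L_n$ is not bookkeeping: it requires a divisibility statement of the shape ``if $L_n$ divides $\sum_{b}M_{n;b_1,\ldots,b_j}\psi_b$ in $\sym\otimes\wedge^j(V)$, then $L_n$ divides each $\psi_b$,'' which does not follow from Lemma~\ref{lem:Mui4.5} as stated and is precisely the technical heart of Mui's proof of his Theorem~4.8. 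A smaller omission: you silently use linear independence over $\sym$ of the full family $\{M_{n;b_1,\ldots,b_j}\}$ (all sequences), whereas Lemma~\ref{lem:Mui5.2} covers only the $b_j=m-1$ family; this does follow from your triangularity observation, but it should be said. Until (i) and (ii) are carried out, what you have is a plausible plan rather than a proof.
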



\subsection{A generalization of invariants of Minh-T\`{u}ng in $\sym\otimes\wedg$.}

The following lemma in the case $q=p$ is \cite[Corollary~1 to
Proposition~1]{MT}, where Minh and  T\`{u}ng established both
their Proposition~1 and Corollary~1 by appealing to Steenrod
algebra. We give below a simple direct proof which is valid for a
general $q$.

\begin{lem}(cf. \cite[Corollary~1]{MT})\label{lem:MTcor1}
Suppose $1\leq j\leq m\leq n$ and $0\leq b_1<\cdots<b_{j-1}<b_j= m-1$.
Then
\begin{align*}
M_{m;b_1,\ldots,b_{j-1},m-1}
(x_1,\ldots,x_{m-1},0,y_1,\ldots,y_{m-1},y_m)
=(-1)^{m+j}M_{m-1;b_1,\ldots,b_{j-1}}\cdot y_m.
\end{align*}
\end{lem}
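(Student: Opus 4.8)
The plan is to evaluate the $m\times m$ determinant defining $M_{m;b_1,\dots,b_{j-1},m-1}$ directly after the substitution $x_m \mapsto 0$, and then expand along the last column. Recall that $M_{m;b_1,\dots,b_{j-1},m-1}$ is $\frac{1}{j!}$ times the determinant of the matrix whose first $j$ rows are $(y_1,\dots,y_m)$ (all equal), followed by the rows $(x_1^{q^a},\dots,x_m^{q^a})$ for $a=0,1,\dots,m-1$ with the exponents $q^{b_1},\dots,q^{b_{j-1}},q^{b_j}$ omitted; since $b_j = m-1$, the row $(x_1^{q^{m-1}},\dots,x_m^{q^{m-1}})$ is among those omitted. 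So the last present "$x$-type" row has exponent at most $q^{m-2}$.

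First I would substitute $x_m = 0$. In the last column, every entry $x_m^{q^a}$ with $a\geq 1$ (and $a=0$ would give $x_m^{q^0}=x_m=0$ as well, but note the exponents present start from $q^0$; the point is $0^{q^a}=0$ for all $a\ge 0$) becomes $0$, so the only nonzero entries in column $m$ are the top $j$ entries, all equal to $y_m$. Expanding the determinant along the last column, each of these $j$ copies of $y_m$ contributes a signed minor; but when we delete one of the first $j$ rows (all equal to $(y_1,\dots,y_{m-1})$ in the remaining columns) together with the last column, the resulting $(m-1)\times(m-1)$ minor is independent of which of the first $j$ rows we deleted. Hence the expansion gives $y_m$ times an alternating sum of $j$ equal minors with signs; the signs work out to give a factor that, after dividing by $j!$ and recognizing the minor as $(j-1)!$ times $M_{m-1;b_1,\dots,b_{j-1}}$ evaluated at $(x_1,\dots,x_{m-1},y_1,\dots,y_{m-1})$, collapses to a single clean term.

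The key steps in order: (i) perform the substitution and identify that only the top $j$ entries of the last column survive; (ii) Laplace-expand along the last column, obtaining $\sum_{i=1}^{j}(-1)^{i+m} y_m \cdot D_i$ where $D_i$ is the minor deleting row $i$ and column $m$; (iii) observe $D_1 = D_2 = \cdots = D_j =: D$ because rows $1$ through $j$ agree on columns $1,\dots,m-1$, so $\sum_{i=1}^j (-1)^{i+m} = (-1)^{m}\sum_{i=1}^j(-1)^i$, which is $0$ if $j$ is even — wait, so I must be more careful: the naive claim that all $D_i$ are literally equal is right, but then the alternating sum telescopes, and one must track that the $\frac1{j!}$ normalization interacts with the $(j-1)!$ hidden inside $D$. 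The cleaner route is: $D$ itself, as an $(m-1)\times(m-1)$ determinant with $j-1$ repeated rows $(y_1,\dots,y_{m-1})$ on top followed by the appropriate $x$-power rows, equals $(j-1)!\cdot M_{m-1;b_1,\dots,b_{j-1}}(x_1,\dots,x_{m-1},y_1,\dots,y_{m-1})$ by definition, and the sign/counting bookkeeping $\frac{1}{j!}\cdot(\text{number of surviving rows})\cdot(j-1)! = 1$ only after one correctly accounts that the $j$ expansion terms do \emph{not} all carry the same sign. So the real computation is: $\frac{1}{j!}\sum_{i=1}^{j}(-1)^{i+1}\,(-1)^{(m-1)+m}\, y_m\, D$; here the extra $(-1)^{i+1}$ comes from moving row $i$ to be adjacent to column $m$'s removal, and I expect $\sum_{i=1}^j (-1)^{i+1}$ to \emph{not} be what appears—rather, since all top $j$ rows are identical, the correct statement is that the cofactor expansion of a determinant along a column, where $j$ entries are equal to $y_m$ sitting in $j$ identical rows, yields exactly $j$ times one such cofactor only if the minors are taken with a consistent row labeling, which they are not. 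I will instead argue combinatorially: expand the full determinant as a sum over permutations, collect the terms using $y_m$ from column $m$, and the $j!$ permutations of the $j$ identical top rows among themselves, producing the factor $j!$ that cancels $\frac1{j!}$, leaving a single minor times $y_m$ with the sign $(-1)^{m+j}$ obtained by counting the transpositions needed to bring the chosen row and column into standard position.

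The main obstacle will be the sign bookkeeping: pinning down that the overall sign is exactly $(-1)^{m+j}$ and that the factorial normalizations cancel precisely. I would handle this by working with the permutation-sum definition of the row determinant rather than iterated cofactor expansion, so that the $j!$ symmetry among the repeated $y$-rows is manifest and cancels $\frac{1}{j!}$ cleanly, and the residual sign is just the sign of the cycle that moves the "$y_m$ slot" past the omitted/remaining rows — which is a single $(m-j)$-ish transposition count giving $(-1)^{m+j}$. Everything else is a routine identification of the leftover $(m-1)\times(m-1)$ determinant with $(j-1)!\,M_{m-1;b_1,\dots,b_{j-1}}$, i.e., directly with the definition of $M_{m-1;b_1,\dots,b_{j-1}}$.
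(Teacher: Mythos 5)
Your overall strategy is the same as the paper's: set $x_m=0$, observe that only the top $j$ entries of the last column survive (all equal to $y_m$), expand along that column, and identify the remaining $(m-1)\times(m-1)$ determinant with $(j-1)!\,M_{m-1;b_1,\ldots,b_{j-1}}$. However, the one point you yourself single out as the main obstacle --- the sign bookkeeping --- is never actually resolved, and as written this is a genuine gap. The difficulty you ran into (the cofactor signs $(-1)^{k+m}$ alternate over the $j$ identical rows, so a naive expansion would give a factor $\sum_{k=1}^{j}(-1)^{k}$, which vanishes for $j$ even) is real, and its resolution is not a relabeling issue: it comes from the fact that the $y_i$ are \emph{odd} elements of the exterior algebra and the row determinant multiplies entries in row order. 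When the entry $y_m$ is taken from the $k$-th of the $j$ repeated rows, writing the term as (minor)$\cdot y_m$ requires moving $y_m$ past the remaining $j-k$ odd factors, which contributes an extra $(-1)^{j-k}$. Since $(-1)^{m-k}(-1)^{j-k}=(-1)^{m+j}$ for every $k$, all $j$ contributions carry the \emph{same} sign, and one gets $\frac{1}{j!}\cdot j\cdot(-1)^{m+j}\cdot (j-1)!\,M_{m-1;b_1,\ldots,b_{j-1}}\cdot y_m=(-1)^{m+j}M_{m-1;b_1,\ldots,b_{j-1}}\cdot y_m$, which is exactly the computation in the paper.

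Your proposed fallback (``work with the permutation sum so the $j!$ symmetry is manifest and the residual sign is an $(m-j)$-ish transposition count'') only asserts the sign $(-1)^{m+j}$; it does not derive it, and the $j!$ symmetry among identical rows is itself a consequence of the same anticommutativity (for commuting entries a determinant with $j\geq 2$ equal rows would simply be zero, so any argument treating the top block as in an ordinary determinant cannot work). There is also an internal inconsistency in your counting: you claim at one point that the full $j!$ cancels $\frac{1}{j!}$ leaving a single bare minor, and at another that the cancellation is $\frac{1}{j!}\cdot(\text{number of surviving rows})\cdot(j-1)!=1$ with the minor equal to $(j-1)!\,M_{m-1;b_1,\ldots,b_{j-1}}$; these correspond to different groupings and cannot both be used at once. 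Once you make the Grassmann reordering sign $(-1)^{j-k}$ explicit, the second bookkeeping closes the proof and coincides with the paper's argument.
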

\begin{proof}

By applying the Laplacian expansion along the last column (cf.
\cite[1.7]{Mui}) to the determinant in the definition of
$M_{m;b_1,\ldots,b_{j-1},m-1}
(x_1,\ldots,x_{m-1},0,y_1,\ldots,y_{m-1},y_m)$, we obtain that
{\allowdisplaybreaks
\begin{align*} &M_{m;b_1,\ldots,b_{j-1},m-1}
(x_1,\ldots,x_{m-1},0,y_1,\ldots,y_{m-1},y_m)\\
&=\frac{1}{j!}\sum^j_{k=1}(-1)^{m-k}(-1)^{j-k}
\left|
\begin{array}{cccc}
y_1&y_2&\cdots&y_{m-1}\\
\vdots&\vdots&\vdots&\vdots\\
y_1&y_2&\cdots&y_{m-1}\\
x_1&x_2&\cdots&x_{m-1}\\
\vdots&\vdots&\vdots&\vdots\\
\widehat{x_1^{q^{b_1}}}&\widehat{x_2^{q^{b_1}}}&\cdots&\widehat{x_{m-1}^{q^{b_1}}}\\
\vdots&\vdots&\vdots&\vdots\\
\widehat{x_1^{q^{b_{j-1}}}}&\widehat{x_2^{q^{b_{j-1}}}}&\cdots&\widehat{x_{m-1}^{q^{b_{j-1}}}}\\
\vdots&\vdots&\vdots&\vdots\\
x_1^{q^{m-2}}&x_2^{q^{m-2}}&\cdots&x_{m-1}^{q^{m-2}}
\end{array}
\right|\cdot y_m\\
&=\frac{1}{j!}(-1)^{m+j}j \big((j-1)!M_{m-1;b_1,\ldots,b_{j-1}}\big)  y_m\\
&=(-1)^{m+j}M_{m-1;b_1,\ldots,b_{j-1}}\cdot y_m.
\end{align*}
 }
The lemma is proved.
\end{proof}

Let $I=(n_1,\ldots,n_{\ell})$ be a composition of $n$. Recalling
the notations in Subsection~\ref{subsec:Dickinv} and
using~(\ref{eqn:Vm}), one can deduce that
\begin{align}
v_{i,j} =
x_{m_{i-1}+j}^{q^{m_{i-1}}}
+\sum^{m_{i-1}-1}_{k=0}(-1)^{m_{i-1}-k}Q_{m_{i-1},k}x_{m_{i-1}+j}^{q^k}.
\label{eqn:vij}
\end{align}
for $1\leq i\leq\ell$ and $1\leq j\leq n_i$. We can write $g\in
P_I$ uniquely as
\begin{equation}
g= \left( {\begin{array}{cccc}
 g_1 & *&\cdots &* \\
 0 &  g_2&\cdots& * \\
 \vdots&\vdots&\vdots&\vdots\\
 0&0&\cdots& g_{\ell}
 \end{array} }
\right)\label{eqn:elem.g}
 =\text{diag} \ (g_1, \ldots, g_\ell)
\cdot U_g
\end{equation}
where
$$
U_g = \left( {\begin{array}{cccc}
 I_{n_1} & *&\cdots &* \\
 0 &  I_{n_2}&\cdots& * \\
 \vdots&\vdots&\vdots&\vdots\\
 0&0&\cdots& I_{n_\ell}
 \end{array} }
\right).
$$
Note that $ U_g \cdot v_{i,j}=v_{i,j}$.
Since $Q_{m_{i-1},k}$ are $GL_{m_{i-1}}(q)$-invariant, using
(\ref{eqn:vij}) we have
\begin{align}
g\cdot v_{i,j}= \text{diag} \ (g_1, \ldots, g_\ell) \cdot v_{i,j}=
\sum^{n_i}_{k=1}(g_i)_{kj}v_{i,k}, \label{eqn:gvij}
\end{align}
where $(g_i)_{kj}$ denotes the $(k,j)$-entry of the matrix $g_i$.
Define $\theta_i, 1\leq i\leq\ell$, by letting
\begin{align*}
\theta_i=L_{n_i}(v_{i,1},v_{i,2},\ldots,v_{i,n_i}).
\end{align*}
Then by~(\ref{eqn:gvij}) we have
\begin{align}
g\cdot \theta_i&=\det(g_i) \ \theta_i, \label{eqn:theta}
\end{align}
for $g\in P_I$ of the form~(\ref{eqn:elem.g}) and $1\leq
i\leq\ell$.
Moreover, by~(\ref{eqn:relQL}) we have
\begin{align}
q_{i,0}=\theta_i^{q-1}.
\label{eqn:relql}
\end{align}

Let $K_I$ be the subgroup of $P_I$ consisting of matrices of the
form
\begin{align*}
\left(
{\begin{array}{cccc}
 A_1 & *&\cdots &* \\
 0 &  A_2&\cdots& * \\
 \vdots&\vdots&\vdots&\vdots\\
 0&0&\cdots& A_{\ell}
 \end{array} }
\right)
\end{align*}
with $ A_{i}\in SL_{n_i}(q), 1\leq i\leq\ell.$ Based on
Lemmas~\ref{lem:Mui4.5}-\ref{lem:Mui5.2},
Theorem~\ref{thm:Muithms}, Lemma~\ref{lem:MTcor1},
\eqref{eqn:theta} and \eqref{eqn:relql}, we can
obtain Lemma~\ref{lem:MTlem2} and Theorem~\ref{th:KI-inv} below
(which is now valid for a general $q$) by the same arguments as
in~\cite[Section~3]{MT}. We omit the details.
\begin{lem}(cf. \cite[Lemma~2]{MT})\label{lem:MTlem2}
Fix $1\leq j\leq n$ and let
\begin{align*}
f=\sum_{1\leq i\leq\ell,m_i\geq j}~\sum_{0\leq b_1<\cdots<b_j\leq
m_i-1, b_j\geq m_{i-1}}
M_{m_i;b_1,\ldots,b_j}f_{m_i;b_1,\ldots,b_j}(x_1,\ldots,x_n).
\end{align*}
Then $f=0$ if and only if all $f_{m_i;b_1,\ldots,b_j}=0$.
\end{lem}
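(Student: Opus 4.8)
The plan is to reduce Lemma~\ref{lem:MTlem2} to the already-known linear independence statement of Lemma~\ref{lem:Mui5.2}. The key observation is that the monomials $M_{m_i;b_1,\ldots,b_j}$ appearing here have top index $b_j$ allowed to range only over $m_{i-1}\le b_j\le m_i-1$ as $i$ varies, whereas Lemma~\ref{lem:Mui5.2} (with $m$ replaced by $m_\ell = n$) handles the full range $b_j = m-1$ for all $m$ between $j$ and $n$. So first I would recall from Lemma~\ref{lem:Mui5.1} that multiplying $M_{m_i;b_1,\ldots,b_j}$ by the product $V_{m_i+1}\cdots V_n$ expresses it as $M_{n;b_1,\ldots,b_j}$ plus a $U_n(q)$-invariant combination of $M_{n;c_1,\ldots,c_j}$ with $(c_1,\ldots,c_j) > (b_j-j+1,\ldots,b_j)$. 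The point is that this triangularity is with respect to the partial order on the top-block structure: raising $b_j$ can only move it into a block with larger index $i$, never a smaller one.

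The concrete execution: suppose $f = 0$. Multiply the whole expression by $V_{m_1+1}V_{m_1+2}\cdots V_n$ (if the smallest block index occurring is $i=1$; in general multiply by the appropriate product of $V_m$'s). Using Lemma~\ref{lem:Mui5.1} termwise, $f\cdot(V_{m_1+1}\cdots V_n)$ becomes a sum $\sum M_{n;c_1,\ldots,c_j}\cdot g_{c_1,\ldots,c_j}$ where each $g_{c_1,\ldots,c_j}$ lies in $\F_q[x_1,\ldots,x_n]$. Since this equals $0$ and the $V_m$'s are nonzero in the integral domain $\F_q[x_1,\ldots,x_n]$, we get $\sum M_{n;c_1,\ldots,c_j} g_{c_1,\ldots,c_j} = 0$, and now the $c_j$'s range over $0\le c_1 < \cdots < c_j \le n-1$ with $c_j$ unrestricted — precisely the setting where Lemma~\ref{lem:Mui5.2} (after reindexing $c_j$ to $m-1$, i.e. grouping by the value of $c_j+1$) or rather the full linear independence of $\{M_{n;c_1,\ldots,c_j}\}$ over $\F_q[x_1,\ldots,x_n]$ applies. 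Hence all coefficients vanish. Then running the triangularity backwards — starting from the leading term $M_{n;b_1,\ldots,b_j}$ with the maximal $(b_1,\ldots,b_j)$ and descending — forces each original $f_{m_i;b_1,\ldots,b_j}$ to be $0$. The converse direction ($f_{m_i;b_1,\ldots,b_j} = 0 \Rightarrow f = 0$) is trivial.

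An alternative, perhaps cleaner, route that avoids multiplying by the $V_m$'s: argue directly that the set $\{M_{m_i;b_1,\ldots,b_j} : 1\le i\le\ell,\ m_i\ge j,\ 0\le b_1<\cdots<b_j\le m_i-1,\ b_j\ge m_{i-1}\}$ is linearly independent over $\F_q[x_1,\ldots,x_n]$ by specialization. Using Lemma~\ref{lem:MTcor1} repeatedly, one can set variables $x_{m}, x_{m-1},\ldots$ equal to $0$ (top blocks first) to peel off the terms with the largest top index $b_j$; after such a specialization $M_{m_i;b_1,\ldots,b_j}$ with $b_j = m_i - 1$ in the top occurring block degenerates (up to sign) to $M_{m_i-1;b_1,\ldots,b_{j-1}}\cdot y_{m_i}$, while terms from lower blocks survive unchanged in the remaining variables. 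Isolating the coefficient of $y_{m_i}$ and inducting on $n$ (or on $\ell$) then gives the claim. Either way, the arguments are exactly those of \cite[Section~3]{MT}, valid over a general $\F_q$ because all the ingredients (Lemmas~\ref{lem:Mui4.5}, \ref{lem:Mui5.1}, \ref{lem:Mui5.2}, \ref{lem:MTcor1}) have been established over $\F_q$ above.

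The main obstacle is bookkeeping the partial order $(c_1,\ldots,c_j) > (b_1,\ldots,b_j)$ against the block partition $m_0 < m_1 < \cdots < m_\ell = n$, i.e. verifying that the error terms produced by Lemma~\ref{lem:Mui5.1} never fall outside the index set being considered and that the triangular system they form can actually be solved from the top down. This is purely combinatorial and is handled as in \cite{MT}; no new idea beyond the $\F_q$-valid versions of Mui's lemmas is needed, which is why the details are omitted.
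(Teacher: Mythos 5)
Your overall strategy -- exploit the triangularity of Lemma~\ref{lem:Mui5.1} to reduce to Mui's independence statement -- is exactly what the paper intends: the paper gives no written proof of Lemma~\ref{lem:MTlem2}, deferring to the arguments of \cite[Section~3]{MT}, which are of this type. But your main route, as written, has a genuine gap at the decisive step. After multiplying by $V_{m_1+1}\cdots V_n$ you arrive at a relation $\sum_{c} M_{n;c_1,\ldots,c_j}\, g_{c_1,\ldots,c_j}=0$ with $c_j$ unrestricted, and you invoke Lemma~\ref{lem:Mui5.2} ``after reindexing $c_j$ to $m-1$''. That is not what Lemma~\ref{lem:Mui5.2} asserts: its family is $\{M_{m;b_1,\ldots,b_j} : b_j=m-1,\ j\le m\le n\}$, i.e.\ the elements $M_{b_j+1;\,b_1,\ldots,b_j}$ of \emph{minimal} size $b_j+1$ (involving only $x_1,\ldots,x_{b_j+1},y_1,\ldots,y_{b_j+1}$), not the size-$n$ elements $M_{n;c_1,\ldots,c_j}$. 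So the linear independence of $\{M_{n;c_1,\ldots,c_j}\}$ over $\F_q[x_1,\ldots,x_n]$ that your argument needs is not among the quoted results; it is true, but you would have to prove it separately (e.g.\ via Lemma~\ref{lem:Mui4.5}, or by noting that the coefficients of the $y$-monomials in the $M_{n;c}$ are the complementary minors of a matrix with determinant $L_n\neq 0$), or else run the reduction in the opposite direction, closer to \cite{MT}: use Lemma~\ref{lem:Mui5.1} to rewrite each $M_{m_i;b_1,\ldots,b_j}$ as $M_{b_j+1;\,b_1,\ldots,b_j}V_{b_j+2}\cdots V_{m_i}$ minus error terms of strictly larger top index, iterate, and then apply Lemma~\ref{lem:Mui5.2} verbatim.

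Two further points. The error terms of Lemma~\ref{lem:Mui5.1} satisfy $(c_1,\ldots,c_j)>(b_j-j+1,\ldots,b_j)$, which forces $c_j>b_j$; hence the transition matrix is triangular with nonzero diagonal entries (products of $V$'s), and the back-substitution must start from the \emph{minimal} top index present (or one simply invokes triangularity with nonzero diagonal over the domain $\F_q[x_1,\ldots,x_n]$) -- your description ``starting from the maximal $(b_1,\ldots,b_j)$ and descending'' runs the induction the wrong way, since the coefficient attached to the maximal index involves all the unknowns. Your alternative specialization route is also not sound as sketched: setting variables to $0$ specializes the coefficients $f_{m_i;b_1,\ldots,b_j}(x_1,\ldots,x_n)$ as well, so one only concludes vanishing of the specialized coefficients, and Lemma~\ref{lem:MTcor1} covers only the terms with $b_j=m_i-1$, not the other terms of the same block. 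The bookkeeping you do get right is the bijection sending each sequence $0\le c_1<\cdots<c_j\le n-1$ to the unique block with $m_{i-1}\le c_j\le m_i-1$; with the independence input corrected as above, the argument becomes the intended one.
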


\begin{thm}(cf. \cite[Theorem~2]{MT})  \label{th:KI-inv}
Let $I=(n_1,\ldots,n_{\ell})$ be a composition of $n$. Then,

(1) $\sym^{K_I}$ is a polynomial algebra in $n$ generators as
follows:
\begin{align*}
\sym^{K_I}=\F_q[\theta_1,q_{1,1},\ldots,q_{1,n_1-1},
\theta_2,q_{2,1},\ldots,q_{2,n_2-1},\ldots,
\theta_{\ell},q_{\ell,1},\ldots,q_{\ell,n_{\ell}-1}].
\end{align*}
(2) As an $\sym^{K_I}$-module, $(\sym\otimes\wedg)^{K_I}$ is free
and it is decomposed as
\begin{align*}
 ( & \sym  \otimes\wedg)^{K_I}\\
  & =\sym^{K_I} \oplus\sum^n_{j=1}\sum_{1\leq i\leq\ell, m_i\geq
j}~ \sum_{0\leq b_1<\cdots<b_j\leq m_i-1, b_j\geq m_{i-1}}
M_{m_i;b_1,\ldots,b_j}\sym^{K_I}.
\end{align*}
\end{thm}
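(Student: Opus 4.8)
The plan is to mirror the Minh--T\`ung argument in \cite[Section~3]{MT}, now having at our disposal a general-$q$ version of the three Mui lemmas (Lemmas~\ref{lem:Mui4.5}--\ref{lem:Mui5.2}), the Mui structure theorem (Theorem~\ref{thm:Muithms}), the evaluation identity Lemma~\ref{lem:MTcor1}, and the transformation formulas \eqref{eqn:theta} and \eqref{eqn:relql}. For part~(1), I would first observe that $K_I \supseteq U_n(q)$, so $\sym^{K_I} \subseteq \sym^{U_n(q)} = \F_q[V_1,\ldots,V_n]$ by the last displayed identity of Theorem~\ref{thm:Muithms}; since each $v_{i,j}$ is expressed in terms of the $V$'s via \eqref{eqn:vij}, and the group $K_I$ acts on the span of $\{v_{i,1},\ldots,v_{i,n_i}\}$ through $SL_{n_i}(q)$ by \eqref{eqn:gvij}, an element of $\sym^{U_n(q)}$ written as a polynomial in the $v_{i,j}$ is $K_I$-invariant iff it lies in $\bigotimes_i \F_q[v_{i,1},\ldots,v_{i,n_i}]^{SL_{n_i}(q)}$. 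By Dickson's theorem applied inside each block, this block-$SL$ invariant ring is $\F_q[\theta_i, q_{i,1},\ldots,q_{i,n_i-1}]$ (using \eqref{eqn:relql} to match $\theta_i^{q-1}=q_{i,0}$), and taking the tensor product over $i$ gives the claimed generators. Algebraic independence and the fact that these $n$ elements generate \emph{all} of $\sym^{K_I}$ follows because the total transcendence degree is $n$ and the $V_j$'s are themselves polynomials in the $v_{i,j}$'s, so no invariants are lost.

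For part~(2), I would start from the $U_n(q)$-invariant decomposition in Theorem~\ref{thm:Muithms}, namely
\begin{align*}
(\sym\otimes\wedg)^{U_n(q)} = \F_q[V_1,\ldots,V_n] \oplus \bigoplus_{j=1}^n \bigoplus_{m=j}^n \bigoplus_{0\le b_1<\cdots<b_j=m-1} M_{m;b_1,\ldots,b_j}\,\F_q[V_1,\ldots,V_n],
\end{align*}
and then extract the $K_I$-invariants of the right-hand side. Each $M_{m;b_1,\ldots,b_j}$ transforms by $\det$ under $GL_m(q)$, so under the block-diagonal part of $K_I$ it is \emph{invariant} (determinants are $1$ in $SL$), while under the upper-unitriangular part $U_g$ it is fixed since the $v$'s are. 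The subtlety is that $M_{m;b_1,\ldots,b_j}$ with $m$ not equal to a block boundary $m_i$ is not individually $K_I$-invariant as a coefficient-bearing object; one rewrites the $U_n(q)$-decomposition in the ``$v$-adapted'' basis $\{M_{m_i;b_1,\ldots,b_j}\}$ (with $b_j \ge m_{i-1}$) using Lemma~\ref{lem:Mui5.1} to pass between the $V$-filtration and the $v$-filtration, and uses Lemma~\ref{lem:MTlem2} — the linear-independence statement tailored to exactly these index sets — to conclude that the coefficients must separately lie in $\sym^{K_I}$. Freeness over $\sym^{K_I}$ is then immediate from Lemma~\ref{lem:MTlem2}, which says the $M_{m_i;b_1,\ldots,b_j}$ together with $1$ are linearly independent over $\F_q[x_1,\ldots,x_n]$, hence a fortiori over $\sym^{K_I}$.

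The main obstacle, and the reason the argument is delicate rather than routine, is the change of basis between the Mui monomials attached to the full flag ($M_{m;b_1,\ldots,b_j}$ for all $m\le n$) and those attached to the coarser flag cut out by $I$ ($M_{m_i;b_1,\ldots,b_j}$ with $b_j\ge m_{i-1}$): one must show that multiplying a block-boundary Mui monomial by the ``missing'' factor $V_{m_i+1}\cdots V_{m_{i'}}$ lands, modulo the span of higher monomials with $\F_q[x_1,\ldots,x_n]^{U_n(q)}$-coefficients (Lemma~\ref{lem:Mui5.1}), back inside the $K_I$-invariant span, and conversely that every $K_I$-invariant element of $(\sym\otimes\wedg)^{U_n(q)}$ is captured this way. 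This is precisely where Lemma~\ref{lem:MTcor1} (the specialization $x_{m_{i-1}+1}=\cdots$ needed to strip off one $y$-variable at a time and induct on $\ell$) and the Laplace-expansion bookkeeping enter, and where the restriction to $q=p$ in \cite{MT} was removed. Since all the required $q$-general inputs are now in place, the remaining work is a step-by-step transcription of \cite[Section~3]{MT}, so I would state the result and indicate that the proof proceeds exactly as there, which is what the excerpt does.
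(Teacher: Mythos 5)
Your proposal matches the paper's treatment: the paper gives no detailed proof of Theorem~\ref{th:KI-inv}, stating only that it follows from Lemmas~\ref{lem:Mui4.5}--\ref{lem:Mui5.2}, Theorem~\ref{thm:Muithms}, Lemma~\ref{lem:MTcor1}, \eqref{eqn:theta} and \eqref{eqn:relql} by the same arguments as in \cite[Section~3]{MT}, which is exactly the route you take (blockwise Dickson invariants in the $v_{i,j}$ for part~(1), and extraction of $K_I$-invariants from the $U_n(q)$-decomposition via Lemmas~\ref{lem:Mui5.1} and \ref{lem:MTlem2} for part~(2)). Your sketch of how the transcription of \cite{MT} goes is consistent with the intended argument, so this is essentially the same approach.
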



\section{The  parabolic subgroup invariants in $\sym\otimes\wedg\otimes\Det^k$}
\label{sec:para.inv.}


\subsection{The $P_I$-invariants in $\sym\otimes\wedg\otimes\Det^k$.}

We shall identify the $GL_n(q)$-module
$\sym\otimes\wedg\otimes\Det^k, 1\leq k\leq q-2$, as
$\sym\otimes\wedg$ with a twisted action by the $k$th power of the
determinant, that is,
$$
g\cdot_kf=\det(g)^k \ g\cdot f, \qquad \text{ for } g\in
GL_n(q), f\in\sym\otimes\wedg.
$$

\begin{thm}\label{thm:invparabolic}
Let $I=(n_1,\ldots,n_{\ell})$ be a composition of $n$. Then,

(1)  $(\sym\otimes\wedg)^{P_I}$ is a free $\sym^{P_I}$-module of
rank $2^n$, with a basis consisting of 1 and
$M_{m_i;b_1,\ldots,b_j}\theta_1^{q-2}\cdots \theta_i^{q-2}$ for
$1\leq i\leq {\ell}, 1\leq j\leq m_i$ and $0\leq
b_1<b_2<\cdots<b_j\leq m_i-1, b_j\geq m_{i-1}$.

(2) $(\sym\otimes\wedg\otimes {\rm Det}^k)^{P_I}$ is a free
$\sym^{P_I}$-module of rank $2^n$, with a basis  consisting of
$(\theta_1\cdots\theta_{\ell})^{q-1-k}$ and
$M_{m_i;b_1,\ldots,b_j}\theta_1^{q-2-k}\cdots
\theta_i^{q-2-k}\theta_{i+1}^{q-1-k}\cdots \theta_{\ell}^{q-1-k}$
for $1\leq i\leq \ell, 1\leq j\leq m_i$ and $ 0\leq
b_1<b_2<\cdots<b_j\leq m_i-1, b_j\geq m_{i-1}$, where $1\leq k\leq
q-2$.
\end{thm}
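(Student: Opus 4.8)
The plan is to reduce Theorem~\ref{thm:invparabolic} to the already-established description of $K_I$-invariants in Theorem~\ref{th:KI-inv}, using the normal subgroup inclusion $K_I \triangleleft P_I$ with quotient a product of tori. First I would record that $P_I/K_I \cong \prod_{i=1}^\ell (\F_q^\times)$ via the maps $g \mapsto (\det g_1, \ldots, \det g_\ell)$, so that taking $P_I$-invariants amounts to taking $K_I$-invariants and then extracting the subspace on which each $\det g_i$ acts trivially (for part~(1)) or with the prescribed character $\det(g_i)^{-k}$ (for part~(2), after the twisted action $\cdot_k$). Since $|\F_q^\times| = q-1$ is prime to $p$, this averaging is exact and preserves freeness over the invariant ring.

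Next I would compute how $P_I$ acts, via the characters $\det(g_i)$, on each of the free module generators from Theorem~\ref{th:KI-inv}(2). By \eqref{eqn:theta} we have $g\cdot\theta_i = \det(g_i)\,\theta_i$, so a monomial $\theta_1^{a_1}\cdots\theta_\ell^{a_\ell}$ transforms by $\prod_i \det(g_i)^{a_i}$; and by the transformation law quoted just before Lemma~\ref{lem:Mui4.5}, $g\cdot M_{m_i;b_1,\ldots,b_j} = \det(g)\,M_{m_i;b_1,\ldots,b_j}$ where here $\det$ is the determinant of the relevant $GL_{m_i}(q)$-block, i.e.\ $M_{m_i;b_1,\ldots,b_j}$ transforms by $\prod_{t=1}^{i}\det(g_t)$ under $P_I$ (using that $q_{t,k}$, hence $\sym^{K_I}$, is genuinely $K_I$-invariant and the $\theta$'s carry the rest). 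Then, over the polynomial ring $\sym^{K_I}$, the free $K_I$-module decomposes as a direct sum of its $\F_q[\theta_1,\ldots,\theta_\ell]$-weight spaces modulo $q-1$; within each weight space I would pick, for each basis element $M_{m_i;b_1,\ldots,b_j}$, the unique power $\theta_1^{c_1}\cdots\theta_\ell^{c_\ell}$ with $0\le c_t\le q-2$ correcting the $\prod_{t\le i}\det(g_t)$ weight to the desired $P_I$-character. For part~(1) (trivial character) this forces $c_t = q-2$ for $t\le i$ and $c_t = 0$ for $t > i$, giving exactly $M_{m_i;b_1,\ldots,b_j}\theta_1^{q-2}\cdots\theta_i^{q-2}$; for the trivial basis element $1$ it stays $1$. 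For part~(2) (character $\det(g_i)^{-k}$, coming from the twist) the correction shifts each exponent by $-k \bmod (q-1)$, yielding $q-2-k$ on the first $i$ factors and $q-1-k$ on the rest, and the "1" becomes $(\theta_1\cdots\theta_\ell)^{q-1-k}$.

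Finally I would assemble these into the stated bases. Because $\sym^{P_I} \subseteq \sym^{K_I}$ is a subring over which $\sym^{K_I}$ is itself free — indeed $\sym^{K_I} = \sym^{P_I}[\theta_1,\ldots,\theta_\ell]/(\theta_i^{q-1} - q_{i,0})$-type relations, more precisely $\sym^{K_I}$ is free over $\sym^{P_I}$ of rank $(q-1)^\ell$ with basis the monomials $\theta_1^{c_1}\cdots\theta_\ell^{c_\ell}$, $0\le c_i\le q-2$, by Theorem~\ref{thm:KM-He} and \eqref{eqn:relql} — the listed $P_I$-weight vectors are $\sym^{P_I}$-linear combinations that remain part of an $\sym^{K_I}$-basis, hence are $\sym^{P_I}$-linearly independent; and they span the relevant $P_I$-isotypic component of $(\sym\otimes\wedg)^{K_I}$ by the weight-space argument. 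Counting the generators: $1$ (or its twisted analogue) plus $\sum_{i=1}^\ell \#\{(j;b_1,\ldots,b_j): 1\le j\le m_i,\ 0\le b_1<\cdots<b_j\le m_i-1,\ b_j\ge m_{i-1}\}$, which telescopes to $2^n - 1$, gives rank $2^n$. The main obstacle I anticipate is the bookkeeping in the last step: verifying that the chosen $\theta$-corrected elements still form part of a free $\sym^{K_I}$-basis (so that no collisions or hidden $\sym^{P_I}$-relations occur) and that the degree/weight count exactly exhausts the $P_I$-isotypic subspace — this requires care with the relation $\theta_i^{q-1} = q_{i,0} \in \sym^{P_I}$ and with the fact that multiplying a basis element by $\theta_i^{q-2}$ rather than a lower power is the correct representative modulo these relations. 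Freeness over $\sym^{P_I}$ then follows since an exact ($p'$-order) projector applied to a free module lands in a direct summand, which over a polynomial ring is again free by Quillen–Suslin, but in fact here it is transparently free from the explicit basis.
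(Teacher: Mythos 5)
Your proposal is correct and follows essentially the same route as the paper: reduce to $K_I$-invariants via Theorem~\ref{th:KI-inv}, use the character computation with $g\cdot\theta_i=\det(g_i)\theta_i$ and $g\cdot M_{m_i;b_1,\ldots,b_j}=\det(g_1)\cdots\det(g_i)M_{m_i;b_1,\ldots,b_j}$ together with the freeness of $\sym^{K_I}$ over $\sym^{P_I}$ on the $\theta$-monomials with exponents in $[0,q-2]$ (via $\theta_i^{q-1}=q_{i,0}$), and conclude linear independence from Lemma~\ref{lem:MTlem2}. Your packaging through the quotient torus $P_I/K_I\cong(\F_q^\times)^\ell$ and isotypic projection is only a mild rephrasing of the paper's direct weight-matching argument, and the appeal to Quillen--Suslin is unnecessary since the explicit basis already gives freeness.
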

\begin{proof}
We shall examine the $\sym^{P_I}$-module
$(\sym\otimes\wedge^j(V)\otimes{\rm Det}^k)^{P_I}$ for each fixed
$j$, starting with $j=0$.

Suppose that $1\leq k\leq q-2$. Since
$(\sym\otimes\Det^k)^{P_I}\subseteq(\sym\otimes\Det^k)^{K_I}
=\sym^{K_I}$,  it follows by Theorem~\ref{thm:KM-He},
Theorem~\ref{th:KI-inv}~(1) and (\ref{eqn:relql}) that each $f$ in
$(\sym\otimes\Det^k)^{P_I}$ can be written as
\begin{align*}
f=\sum_{0\leq \alpha_1,\ldots,\alpha_{\ell}\leq q-2}
f_{\alpha_1,\ldots,\alpha_{\ell}}
\theta_1^{\alpha_1}\cdots\theta_{\ell}^{\alpha_{\ell}}
\end{align*}
where $f_{\alpha_1,\ldots,\alpha_{\ell}}\in\sym^{P_I}$. Applying
$g\in P_I$ as in (\ref{eqn:elem.g}) to $f$ and using
(\ref{eqn:theta}), we see that
\begin{align*}
\sum_{0\leq \alpha_1,\ldots,\alpha_{\ell}\leq q-2}
 & f_{\alpha_1,\ldots,\alpha_{\ell}} \theta_1^{\alpha_1}
\cdots\theta_{\ell}^{\alpha_{\ell}}
 \\
=&\sum_{0\leq \alpha_1,\ldots,\alpha_{\ell}\leq q-2}\det(g)^k
f_{\alpha_1,\ldots,\alpha_{\ell}}(\det(g_1))^{\alpha_1}
\cdots(\det(g_{\ell}))^{\alpha_{\ell}}
\theta_1^{\alpha_1}\cdots\theta_{\ell}^{\alpha_{\ell}}
 \\
=&\sum_{0\leq \alpha_1,\ldots,\alpha_{\ell}\leq q-2}
(\det(g_1))^{k+\alpha_1}\cdots(\det(g_{\ell}))^{k+\alpha_{\ell}}
f_{\alpha_1,\ldots,\alpha_{\ell}}
\theta_1^{\alpha_1}\cdots\theta_{\ell}^{\alpha_{\ell}}
\end{align*}
and hence
\begin{align*}
(\det(g_1))^{k+\alpha_1}\cdots(\det(g_{\ell}))^{k+\alpha_{\ell}}=1,
\quad \forall g \in P_I.
\end{align*}
This implies that $k+\alpha_i, 1\leq i\leq\ell$ is divisible by
$q-1$, and thus, $k+\alpha_i=q-1$ for $1\leq i\leq\ell$ by the
constraints on $k,\alpha_i$. Therefore each
$f\in(\sym\otimes\Det^k)^{P_I}$ is of the form
\begin{align*}
f=\bar{f}\theta_1^{q-1-k}\cdots\theta_{\ell}^{q-1-k}, \quad \text{
for }  \bar{f}\in\sym^{P_I}.
\end{align*}
Hence, $(\sym\otimes\Det^k)^{P_I}$ is a free $\sym^{P_I}$-module
generated by $\theta_1^{q-1-k}\cdots\theta_{\ell}^{q-1-k}$ .

It remains to show that, for given $1\leq j\leq n$ and $0\leq
k\leq q-2$, $(\sym\otimes\wedge^j(V)\otimes{\rm Det}^k)^{P_I}$ is
a free $\sym^{P_I}$-module with basis consisting of elements
$M_{m_i;b_1,\ldots,b_j}\theta_1^{q-2}\cdots \theta_i^{q-2}$ if
$k=0$, and consisting of elements
$M_{m_i;b_1,\ldots,b_j}\theta_1^{q-2-k}\cdots
\theta_i^{q-2-k}\theta_{i+1}^{q-1-k}\cdots \theta_{\ell}^{q-1-k}$
if $1\leq k\leq q-2$, where $1\leq i\leq \ell, m_i \geq j$,
$ 0\leq b_1<b_2<\cdots<b_j\leq m_i-1,$ and $b_j\geq m_{i-1}$.

Since ${\rm Det}$ is $K_I$-invariant, by
Theorem~\ref{th:KI-inv}~(2) we have
\begin{align*}
(\sym\otimes\wedge^j(V)\otimes{\rm Det}^k)^{K_I}
&=(\sym\otimes\wedge^j(V))^{K_I}\\
&=\sum_{1\leq i\leq\ell, m_i\geq j} \sum_{0\leq b_1<\cdots<b_j\leq
m_i-1, b_j\geq m_{i-1}} M_{m_i;b_1,\ldots,b_j}\sym^{K_I}.
\end{align*}
So we can decompose $f\in
(\sym\otimes\wedge^j(V)\otimes{\Det}^k)^{P_I}$ as
\begin{align}
f=\sum_{1\leq i\leq\ell, m_i\geq j} \sum_{0\leq b_1<\cdots<b_j\leq
m_i-1, b_j\geq m_{i-1}}
M_{m_i;b_1,\ldots,b_j}f_{m_i;b_1,\ldots,b_j},\label{eqn:decompf}
\end{align}
where $f_{m_i;b_1,\ldots,b_j}\in \sym^{K_I}$. Observe that, for
$g\in P_I$ of the form~(\ref{eqn:elem.g}),
\begin{align*}
g\cdot M_{m_i;b_1,\ldots,b_j}
=\det(g_1)\cdots\det(g_i)M_{m_i;b_1,\ldots,b_j},
\end{align*}
and hence, we rewrite the equation $g\cdot_k f=f$ as
\begin{align*}
&\sum_{1\leq i\leq\ell, m_i\geq j}\sum_{0\leq b_1<\cdots<b_j\leq m_i-1, b_j\geq m_{i-1}}
M_{m_i;b_1,\ldots,b_j}f_{m_i;b_1,\ldots,b_j}\\
=&\sum_{1\leq i\leq\ell, m_i\geq j}\sum_{0\leq b_1<\cdots<b_j\leq m_i-1, b_j\geq m_{i-1}}
\det(g)^k\det(g_1)\cdots\det(g_i)
M_{m_i;b_1,\ldots,b_j}(g\cdot f_{m_i;b_1,\ldots,b_j}).
\end{align*}
This together with Lemma~\ref{lem:MTlem2} shows that each
$f_{m_i;b_1,\ldots,b_j}$ appearing in (\ref{eqn:decompf})
satisfies
\begin{align*}
f_{m_i;b_1,\ldots,b_j} &=\det(g)^k\det(g_1)
\cdots\det(g_i)(g\cdot f_{m_i;b_1,\ldots,b_j})\notag\\
&=\det(g_1)^{k+1}\cdots\det(g_i)^{k+1}\det(g_{i+1})^k
\cdots\det(g_{\ell})^k(g\cdot f_{m_i;b_1,\ldots,b_j})
\end{align*}
Recall that $f_{m_i;b_1,\ldots,b_j}\in\sym^{K_I}$. It follows by
an argument similar to the one used above in the case $j=0$ that
$f_{m_i;b_1,\ldots,b_j}$ is of the form
\begin{eqnarray} \label{eqn:divisiblef}
\left\{
 \begin{array}{ll}
 \theta_1^{q-2}\cdots\theta_i^{q-2}
\bar{f}_{m_i;b_1,\ldots,b_j}, & \text{ if
} k=0, \\
 \theta_1^{q-2-k}\cdots\theta_i^{q-2-k}
\theta_{i+1}^{q-1-k}\cdots\theta_{\ell}^{q-1-k}\bar{f}_{m_i;b_1,\ldots,b_j},
& \text{ if } 1\leq k\leq q-2
\end{array}
 \right.
\end{eqnarray}
for some $\bar{f}_{m_i;b_1,\ldots,b_j}\in\sym^{P_I}$.

We claim that both parts (1) and (2) of
Theorem~\ref{thm:invparabolic} follows now from
(\ref{eqn:decompf}), (\ref{eqn:divisiblef}), and
Lemma~\ref{lem:MTlem2}. We will only argue for (1) when $k=0$, and
skip similar arguments for (2). Indeed, for $k=0$, by
(\ref{eqn:decompf}) and (\ref{eqn:divisiblef}) each
$f\in(\sym\otimes\wedge^j(V))^{P_I}$ is of the form
\begin{align*}
\sum_{1\leq i\leq\ell, m_i\geq j} \sum_{0\leq b_1<\cdots<b_j\leq
m_i-1, b_j\geq m_{i-1}} M_{m_i;b_1,\ldots,b_j}\theta_1^{q-2}\cdots
\theta_i^{q-2}\bar{f}_{m_i;b_1,\ldots,b_j}
\end{align*}
for $\bar{f}_{m_i;b_1,\ldots,b_j}\in\sym^{P_I}$. On the other
hand, Lemma~\ref{lem:MTlem2} ensures the linear independence of
the elements as listed in Theorem~\ref{thm:invparabolic}~(1), and
any linear combination of these elements over $\sym^{P_I}$ is
clearly contained in $(\sym\otimes\wedge^j(V))^{P_I}$.
\end{proof}

\begin{rem}
Theorem~\ref{thm:invparabolic}~(1) in the special case when $q=p$
is \cite[Theorem 3]{MT}, and our arguments in the general cases
follow  closely the proof therein.
\end{rem}


\subsection{The Hilbert series of $\big(\sym\otimes\wedg\otimes {\Det}^k \big)^{P_I}$.}

The Hilbert series for the bi-graded space
$\big(\sym\otimes\wedg\otimes {\Det}^k \big)^{P_I}$ is defined to
be the generating function in $t,s$:
$$
H \big((\sym\otimes\wedg)^{P_I};t,s \big) := \sum_{i,j} t^i s^j
\dim \big(S^i(V) \otimes\wedge^j(V) \otimes {\Det}^k \big)^{P_I}.
$$
Recall the notation $m_i$ from \eqref{eq:mi}. The following
theorem and Theorem~B are complementary to each other, and they
will be proved together.

\begin{thm}\label{thm:Hser.para.}
Let $I=(n_1,\ldots,n_{\ell})$ be a composition of $n$. Then, the Hilbert
series $H \big((\sym\otimes\wedg)^{P_I};t,s \big)$ is equal to
\begin{align*}
\ds\frac{1-t^{q^{m_1}-1}+\sum^{\ell-1}_{i=1}(t^{q^{m_i}-1}-t^{q^{m_{i+1}}-1})
\prod^{m_i}_{j=1}(1+st^{-q^{j-1}})+t^{q^n-1}\prod^n_{j=1}(1+st^{-q^{j-1}})}
{\prod^{\ell}_{i=1}\prod^{n_i}_{j=1}(1-t^{q^{m_i}-q^{m_i-j}})}.
\end{align*}
\end{thm}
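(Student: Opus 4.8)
The strategy is to read off the Hilbert series directly from the explicit free-module basis furnished by Theorem~\ref{thm:invparabolic}, exploiting the fact that the Hilbert series of a free module over a graded polynomial algebra is the sum of the (bi-)degrees of the basis elements divided by the product $\prod_i (1-t^{d_i})$ over the generator degrees $d_i$. By Theorem~\ref{thm:KM-He} the denominator for both Theorem~B and Theorem~\ref{thm:Hser.para.} is $\prod^{\ell}_{i=1}\prod^{n_i}_{j=1}(1-t^{q^{m_i}-q^{m_i-j}})$, so the entire problem reduces to computing the bi-degree generating function of the $2^n$ basis elements listed in Theorem~\ref{thm:invparabolic}. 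First I would record the relevant degrees: $\theta_i=L_{n_i}(v_{i,1},\ldots,v_{i,n_i})$ has $t$-degree $q^{m_i}-q^{m_{i-1}}$ (since $v_{i,j}$ has degree $q^{m_{i-1}}$ and $L_{n_i}$ is the Dickson/Moore determinant), so the product $\theta_1\cdots\theta_\ell$ has $t$-degree $q^n-1$ while $\theta_1\cdots\theta_i$ has $t$-degree $q^{m_i}-1$; and $M_{m_i;b_1,\ldots,b_j}$ contributes $s$-degree $j$ and $t$-degree equal to (the degree of $L_{m_i}$, namely $\sum_{a=0}^{m_i-1} q^a = \tfrac{q^{m_i}-1}{q-1}$... more precisely the total degree of the defining $m_i\times m_i$ determinant) minus the contribution of the $j$ omitted rows $x^{q^{b_1}},\ldots,x^{q^{b_j}}$, i.e. $t$-degree $\bigl(\sum_{a=0}^{m_i-1}q^a\bigr) - \sum_{r=1}^{j} q^{b_r}$, keeping in mind the $j$ repeated $y$-rows replace the rows $x,x^q,\ldots,x^{q^{j-1}}$ so the bookkeeping must be done carefully.

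\textbf{Key combinatorial step.} The heart of the computation is the sum, for each fixed block index $i$, over all admissible index sets $0\le b_1<\cdots<b_j\le m_i-1$ with $b_j\ge m_{i-1}$, of the monomial $s^j t^{-\sum_r q^{b_r}}$ (after factoring out the common $t$-power coming from $L_{m_i}$ and the $\theta$-powers). Summing over all $j$ and over all subsets of $\{0,1,\ldots,m_i-1\}$ of the prescribed type, one gets a product of the shape $\prod_{a=0}^{m_i-1}(1+st^{-q^a})$, but with a correction: the constraint $b_j\ge m_{i-1}$ (at least one index in the top block $\{m_{i-1},\ldots,m_i-1\}$) forces me to subtract the terms using only indices from $\{0,\ldots,m_{i-1}-1\}$, and the inclusion of the "pure'' term $1$ (the basis element $\theta_1\cdots\theta_i$ with no $M$-factor, or $(\theta_1\cdots\theta_\ell)^{q-1-k}$ in the twisted case) must be tracked separately. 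Carrying out this inclusion–exclusion across the $\ell$ blocks, using $\prod_{a=0}^{m_i-1}(1+st^{-q^a}) = \prod_{j=1}^{m_i}(1+st^{-q^{j-1}})$ and telescoping the resulting boundary terms, is exactly what produces the alternating sum $1-t^{q^{m_1}-1}+\sum_{i=1}^{\ell-1}(t^{q^{m_i}-1}-t^{q^{m_{i+1}}-1})\prod_{j=1}^{m_i}(1+st^{-q^{j-1}})+t^{q^n-1}\prod_{j=1}^{n}(1+st^{-q^{j-1}})$ in the numerator of Theorem~\ref{thm:Hser.para.}. For Theorem~B (the twisted case $1\le k\le q-2$) the basis is more uniform — every element carries the full $\theta_{i+1}^{q-1-k}\cdots\theta_\ell^{q-1-k}$ tail and the $\theta$-powers are $q-2-k$ rather than ranging — so the same summation collapses to the single clean product $\prod_{i=0}^{n-1}(s+t^{q^i})$ times the overall monomial prefactor $t^{(q-2-k)(q^n-1)/(q-1)}$ coming from $\theta_1^{q-2-k}\cdots$ evaluated at total degree; here $\sum_i (q-2-k)(q^{m_i}-q^{m_{i-1}})$-type contributions must be reconciled with the stated exponent $(q-2-k)\tfrac{q^n-1}{q-1}$, which requires noting $\frac{q^n-1}{q-1}=\sum_{a=0}^{n-1}q^a$ and that $L_{m_i}$-degrees and $\theta$-degrees combine appropriately.

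\textbf{Main obstacle.} I expect the principal difficulty to be the degree bookkeeping for $M_{m_i;b_1,\ldots,b_j}$ together with the $\theta$-power prefactors, and in particular verifying that after clearing denominators the $s$-homogeneous pieces match on both sides — i.e., that the telescoping in the numerator of Theorem~\ref{thm:Hser.para.} genuinely reproduces the subset-sum generating function over the admissible $(i;b_1,\ldots,b_j)$. A clean way to organize this is to first prove Theorem~B, where no telescoping is needed and the identity $\sum_{j=0}^{n}\sum_{|B|=j}s^j t^{-\sum_{b\in B}q^b} = \prod_{a=0}^{n-1}(1+st^{-q^a})$ is applied once globally, and then obtain Theorem~\ref{thm:Hser.para.} by the same method but partitioning the admissible index sets according to the largest block they meet, which yields the block-by-block sum $\sum_i (t^{q^{m_i}-1}\cdot(\text{something})-\cdots)$ after writing $\prod_{j=1}^{m_i}(1+st^{-q^{j-1}})-\prod_{j=1}^{m_{i-1}}(1+st^{-q^{j-1}})$ for "meets block $i$''. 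Once the $i=\ell$ term ($t^{q^n-1}\prod_{j=1}^n(1+st^{-q^{j-1}})$) and the lone $1$ are separated out, the middle terms reassemble into the displayed alternating sum, completing both proofs simultaneously. The only genuinely delicate point is signs and the off-by-one in the repeated $y$-rows of $M_{m_i;\cdots}$, which a careful pass through Lemma~\ref{lem:Mui4.5} and the determinant definition will settle.
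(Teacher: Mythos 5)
Your overall strategy is the same as the paper's: the denominator comes from Theorem~\ref{thm:KM-He}, the numerator is the bidegree generating function of the explicit free basis of Theorem~\ref{thm:invparabolic} over $\sym^{P_I}$, and the sum over admissible $(b_1,\ldots,b_j)$ with $b_j\geq m_{i-1}$ is handled by exactly the subtraction (``all subsets of $\{0,\ldots,m_i-1\}$ minus those inside $\{0,\ldots,m_{i-1}-1\}$'') and telescoping that you describe; the paper also proves Theorem~B and Theorem~\ref{thm:Hser.para.} together in this way.

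There is, however, a concrete error in your degree bookkeeping that, taken literally, would derail the computation: $\deg L_{n_i}=1+q+\cdots+q^{n_i-1}$, not $q^{n_i}-1$ (you appear to be conflating $L_n$ with $Q_{n,0}=L_n^{q-1}$), so $\deg\theta_i=q^{m_{i-1}}+q^{m_{i-1}+1}+\cdots+q^{m_i-1}=\frac{q^{m_i}-q^{m_{i-1}}}{q-1}$ and $\deg(\theta_1\cdots\theta_i)=\frac{q^{m_i}-1}{q-1}$, not $q^{m_i}-1$ as you state (your formula is correct only for $q=2$). With your values the block-$i$ prefactor would be $t^{(q-2)(q^{m_i}-1)+\frac{q^{m_i}-1}{q-1}}$ instead of $t^{q^{m_i}-1}$, and the numerator would not match the statement. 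The correct accounting is: $\deg M_{m_i;b_1,\ldots,b_j}=\bigl(\frac{q^{m_i}-1}{q-1}-q^{b_1}-\cdots-q^{b_j},\,j\bigr)$ (which you do have) together with $\deg(\theta_1\cdots\theta_i)^{q-2}=(q-2)\frac{q^{m_i}-1}{q-1}$, so that the basis element $M_{m_i;b_1,\ldots,b_j}(\theta_1\cdots\theta_i)^{q-2}$ has bidegree $\bigl(q^{m_i}-1-q^{b_1}-\cdots-q^{b_j},\,j\bigr)$; this is the exponent that makes your inclusion--exclusion and telescoping reproduce the displayed numerator, and in the twisted case it yields the uniform prefactor $t^{(q-1-k)\frac{q^n-1}{q-1}}$, whence Theorem~B after rewriting $\prod_{a=0}^{n-1}(1+st^{-q^a})$ as $t^{-\frac{q^n-1}{q-1}}\prod_{a=0}^{n-1}(s+t^{q^a})$. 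Once the degree of $\theta_i$ is corrected, your outline is exactly the computation carried out in the paper.
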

\begin{proof}[Proof of Theorem~\ref{thm:Hser.para.} and Theorem~B]
Denote $\deg x =(\alpha, \beta)$ for $x \in S^\alpha(V) \otimes
\wedge^\beta(V)$. Then, for $1\leq i\leq\ell$, we have
\begin{align*}
\deg M_{m_i;b_1,\ldots,b_j}
&=\Big(\frac{q^{m_i}-1}{q-1}-q^{b_1}-\ldots-q^{b_j},j\Big),
\\
\deg \theta_i& =({q^{m_{i-1}}+q^{m_{i-1}+1}+\ldots+q^{m_i-1}},0),
\end{align*}
and hence
\begin{equation}   \label{eq:degree}
\deg \theta_1\cdots\theta_i =\Big({\frac{q^{m_i}-1}{q-1}},0\Big),
 \quad
\deg \theta_{i+1}\cdots\theta_{\ell}
=\Big({\frac{q^n-1}{q-1}-\frac{q^{m_i}-1}{q-1}}, 0\Big ).
\end{equation}
We then obtain by a simple computation that, for $1\leq
i\leq\ell$,
\begin{equation}
\deg \ (M_{m_i;b_1,\ldots,b_j}(\theta_1\cdots\theta_i)^{q-2})
 =\Big({q^{m_i}-1-q^{b_1}-\ldots-q^{b_j}},j\Big),
 \label{eqn:degM0}
\end{equation}
and
\begin{align}
 \deg \ (M_{m_i;b_1, \ldots,b_j} & (\theta_1\cdots\theta_i)^{q-2-k}
(\theta_{i+1}\cdots\theta_{\ell})^{q-1-k})
 \label{eqn:degMk} \\
 =& \Big(
{\frac{q^n-1}{q-1}(q-1-k)-q^{b_1}-\ldots-q^{b_j}},j \Big). \notag
\end{align}
%

Let us now prove Theorem~\ref{thm:Hser.para.}. By
Theorem~\ref{thm:KM-He}, Theorem~\ref{thm:invparabolic}~(1) and \eqref{eqn:degM0},  we have
{\allowdisplaybreaks
\begin{align*}
&H\big((\sym\otimes\wedg)^{P_I};t,s \big)\notag\\
=&\frac{1}{\prod^{\ell}_{i=1}\prod^{n_i}_{j=1}(1-t^{q^{m_i}-q^{m_i-j}})}
~\cdot\notag\\
&\qquad \ds\Big(1+ \sum^{\ell}_{i=1}\sum_{1\leq j\leq
m_i}\sum_{0\leq b_1<\cdots<b_j\leq m_i-1, b_j\geq m_{i-1}} s^j
t^{\big(q^{m_i}-1-q^{b_1}-\ldots-q^{b_j}\big)} \Big) \\
&=\frac{1}{\prod^{\ell}_{i=1}\prod^{n_i}_{j=1}(1-t^{q^{m_i}-q^{m_i-j}})}~\cdot\\
&\qquad \ds\left(1+\sum^{\ell}_{i=1}\sum_{1\leq j\leq m_i} \sum_{0\leq
b_1<\cdots<b_j\leq m_i-1}
s^j t^{\big(q^{m_i}-1-q^{b_1}-\ldots-q^{b_j}\big)} \right. \\
&\left. \qquad\quad - \sum^{\ell}_{i=1}\sum_{1\leq j\leq m_i}
\sum_{0\leq b_1<\cdots<b_j\leq m_{i-1}-1} s^j
t^{\big(q^{m_i}-1-q^{b_1}-\ldots-q^{b_j}\big)}
\right)\\
&=\frac{1}{\prod^{\ell}_{i=1}\prod^{n_i}_{j=1}(1-t^{q^{m_i}-q^{m_i-j}})}~\cdot\\
&\qquad
 \left( 1+\sum^{\ell}_{i=1}t^{q^{m_i}-1}
\Big(\prod^{m_i}_{j=1} (1+st^{-q^{j-1}})-1\Big)
-\sum^{\ell}_{i=1}t^{q^{m_i}-1} \Big(\prod^{m_{i-1}}_{j=1}
(1+st^{-q^{j-1}})-1\Big)
\right) \\
&=\frac{1}{\prod^{\ell}_{i=1}\prod^{n_i}_{j=1}(1-t^{q^{m_i}-q^{m_i-j}})}~\cdot\\
&\Big(1+t^{q^{m_1}-1}\big(\prod^{m_1}_{j=1}(1+st^{-q^{j-1}})-1\big)+
\sum^{\ell}_{i=2}t^{q^{m_i}-1}\big(\prod^{m_i}_{j=1}(1+st^{-q^{j-1}})
-\prod^{m_{i-1}}_{j=1}(1+st^{-q^{j-1}})\big)\Big)\\
=&\ds\frac{1-t^{q^{m_1}-1}+\sum^{\ell-1}_{i=1}(t^{q^{m_i}-1}-t^{q^{m_{i+1}}-1})
\prod^{m_i}_{j=1}(1+st^{-q^{j-1}})+t^{q^n-1}\prod^n_{j=1}(1+st^{-q^{j-1}})}
{\prod^{\ell}_{i=1}\prod^{n_i}_{j=1}(1-t^{q^{m_i}-q^{m_i-j}})}.
\end{align*}
 }

Now we turn to the proof of Theorem~B. By Theorem~\ref{thm:KM-He},
Theorem~\ref{thm:invparabolic}~(2), \eqref{eq:degree} and
(\ref{eqn:degMk}), we compute the Hilbert series of
$(\sym\otimes\wedg\otimes {\Det}^k)^{P_I}$ as follows:
\begin{align*}
H \big( &(\sym\otimes\wedg\otimes {\Det}^k)^{P_I};t,s \big)\notag\\
=&H\big(\sym^{P_I};t \big)\cdot t^{(q-1-k)\frac{q^n-1}{q-1}} \notag\\
&\ds+H \big(\sym^{P_I};t \big)\cdot\sum^n_{j=1}
\sum_{1\leq i\leq\ell, m_i\geq j}~\sum_{0\leq b_1<\cdots<b_j\leq m_i-1, b_j\geq m_{i-1}}
s^j t^{\big(\frac{q^n-1}{q-1}(q-1-k)-q^{b_1}-\ldots-q^{b_j}\big)} \notag\\
=&\frac{t^{(q-1-k)\frac{q^n-1}{q-1}}}{\prod^{\ell}_{i=1}\prod^{n_i}_{j=1}(1-t^{q^{m_i}-q^{m_i-j}})}
\Big(\ds 1+\sum^n_{j=1}
\sum_{1\leq i\leq\ell, m_i\geq j}~\sum_{0\leq b_1<\cdots<b_j\leq m_i-1, b_j\geq m_{i-1}}
s^jt^{-q^{b_1}-\ldots-q^{b_j}}\Big)  
\end{align*}
which can be further simplified by noting that, for fixed $1\leq j\leq n$,
\begin{align*}
 \bigcup^{\ell}_{i=1}\{(b_1,\ldots,b_j) & |~0\leq b_1<\cdots<b_j\leq m_i-1, b_j\geq m_{i-1}\}\\
 &= \{(b_1,\ldots,b_j)~|~0\leq b_1<\cdots<b_j\leq n-1\}.
\end{align*}
Simplifying the above expression, we obtain that
%
\begin{align*}
 H\big(( & \sym\otimes\wedg\otimes {\Det}^k)^{P_I};t,s \big)\\
=&\frac{t^{(q-1-k)\frac{q^n-1}{q-1}}}{\prod^{\ell}_{i=1}
\prod^{n_i}_{j=1}(1-t^{q^{m_i}-q^{m_i-j}})}
\Big(1+\sum^n_{j=1}\sum_{0\leq b_1<\cdots<b_j\leq n-1}
s^jt^{-q^{b_1}-\ldots-q^{b_j}}\Big)\\
=&\frac{t^{(q-1-k)\frac{q^n-1}{q-1}}}{\prod^{\ell}_{i=1}
\prod^{n_i}_{j=1}(1-t^{q^{m_i}-q^{m_i-j}})}
\prod^{n-1}_{j=0}(1+st^{-q^j})\\
=&t^{(q-2-k)\frac{q^n-1}{q-1}} \cdot \frac{\prod^{n-1}_{j=0}(s+t^{q^j})}
{\prod^{\ell}_{i=1}\prod^{n_i}_{j=1}(1-t^{q^{m_i}-q^{m_i-j}})}.
\end{align*}
Hence, Theorem~B is proved.
\end{proof}


\subsection{The $P_I$-invariants in $\sym\otimes\wedg^*$}

There is an isomorphism of $GL_n(q)$-modules $\wedge^{n-j}(V)\cong
\wedge^{j}(V)^*\otimes\wedge^n(V)$ and $\wedge^n(V)\cong \Det$,
where $W^*$ denotes the dual module of $W$. Hence, we have an
isomorphism of $GL_n(q)$-modules
\begin{equation} \label{eq:iso}
\wedge^j(V)^* \cong \wedge^{n-j}(V)\otimes\Det^{q-2}, \qquad 0\leq j\leq n.
\end{equation}

Observe that $\sym\otimes \wedg^*\cong {\rm Hom} (\wedg,
\sym)\cong \oplus_{0\leq m\leq n}{\rm Hom} (\wedge^{m}(V), \sym)$
as $GL_n(q)$-modules. Moreover,
$$
A^*(V;\sym):=\oplus_{0\leq m\leq n}{\rm Hom} (\wedge^m(V), \sym)
$$
affords an $\F_q$-algebra structure as follows. The product
$\alpha\wedge\beta\in{\rm Hom} (\wedge^{m+e}(V), \sym)$, for
$\alpha\in{\rm Hom} (\wedge^m(V), \sym),\beta\in{\rm Hom}
(\wedge^e(V), \sym)$,  is defined by
\begin{align*}
(\alpha\wedge\beta)(v_1\wedge v_2\wedge\cdots\wedge v_{m+e})
=\sum {\rm sgn}(\sigma)\alpha(v_{\sigma(1)}\wedge\cdots\wedge v_{\sigma(m)})
\beta(v_{\sigma(m+1)}\wedge\cdots\wedge v_{\sigma(m+e)}),
\end{align*}
where the summation is over all permutations $\sigma\in S_{m+e}$ satisfying
$\sigma(1)<\cdots<\sigma(m)$ and $\sigma(m+1)<\cdots<\sigma(m+e)$.

Following Crabb \cite{Cr}, we define $\omega_i(V)\in {\rm Hom} (V,
\sym)$ to be the linear map which sends each $v \in V$ to
$v^{q^i}$, for $i\geq 0$. Then, each $\omega_i(V)$ is
$GL_n(q)$-invariant, and one shows \cite{Cr} that
$\omega_0(V),\ldots,\omega_{n-1}(V)$ generate an exterior
$\F_q$-subalgebra of $A^*(V;\sym)$, which is denoted by $
\Omega(V)$. The following proposition recovers
\cite[Proposition~1.1]{Cr}, when $I =(n)$ and so $P_I =GL_n(q)$.
\begin{prop}
We have the following isomorphism of (bi-graded) $\F_q$-algebras:
\begin{align*}
A^*(V;\sym)^{P_I}=\Omega(V)\otimes_{\F_q}\sym^{P_I}.
\end{align*}
\end{prop}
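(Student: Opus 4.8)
The strategy is to transport the already-proved description of $(\sym\otimes\wedg)^{P_I}$ from Theorem~\ref{thm:invparabolic} across the isomorphism \eqref{eq:iso}, and then recognize the resulting basis as the exterior-algebra basis built from the $\omega_i(V)$. First I would fix a graded piece: via $\sym\otimes\wedg^*\cong A^*(V;\sym)=\oplus_m\mathrm{Hom}(\wedge^m(V),\sym)$ together with \eqref{eq:iso}, we have an isomorphism of $GL_n(q)$-modules (hence of $P_I$-fixed subspaces)
\begin{align*}
\mathrm{Hom}(\wedge^m(V),\sym)^{P_I}\cong\big(\sym\otimes\wedge^{n-m}(V)\otimes\Det^{q-2}\big)^{P_I}.
\end{align*}
Setting $k=q-2$ in Theorem~\ref{thm:invparabolic}~(2) (legitimately, since $1\le q-2\le q-2$ once $q\ge 3$; the case $q=2$ is trivial as $\Det$ is then trivial and $q-2=0$, handled by part~(1)) gives that the right-hand side is a free $\sym^{P_I}$-module with an explicit basis: the element $1$ (coming from $(\theta_1\cdots\theta_\ell)^{q-1-k}=(\theta_1\cdots\theta_\ell)^{0}$) together with the $M_{m_i;b_1,\dots,b_{n-m}}$ (since $q-2-k=0$ and $q-1-k=0$, all the $\theta$-powers disappear), ranging over $1\le i\le\ell$, $n-m\le m_i$, and $0\le b_1<\cdots<b_{n-m}\le m_i-1$ with $b_{n-m}\ge m_{i-1}$. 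Taking the direct sum over all $m$, and using the identification from Lemma~\ref{lem:MTlem2}-type combinatorics that the union over $i$ of the allowed $(b_1,\dots,b_j)$ is exactly $\{0\le b_1<\cdots<b_j\le n-1\}$, we conclude that $A^*(V;\sym)^{P_I}$ is a free $\sym^{P_I}$-module with basis $\{1\}\cup\{M_{n;b_1,\dots,b_j}L_n^{q-2}: 1\le j\le n,\ 0\le b_1<\cdots<b_j\le n-1\}$, i.e. the same basis appearing in the $GL_n(q)$ case in Theorem~\ref{thm:Muithms} (with $M_{m_i;\ldots}\theta_1^{q-2}\cdots\theta_i^{q-2}$ reducing, for $I=(n)$, to $M_{n;\ldots}L_n^{q-2}$).

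Next I would identify this basis with a monomial basis of $\Omega(V)$ over $\sym^{P_I}$. The point, which is essentially Crabb's computation \cite{Cr}, is that under the isomorphism $\sym\otimes\wedg^*\cong A^*(V;\sym)$ the class $M_{n;b_1,\dots,b_j}L_n^{q-2}$ corresponds (up to a nonzero scalar in $\F_q$, and using Lemma~\ref{lem:Mui4.5} to factor $M_{n;b_1,\dots,b_j}$ into a product of the $M_{n;b}$'s divided by powers of $L_n$) to the product $\omega_{b_1}(V)\wedge\cdots\wedge\omega_{b_j}(V)$. Concretely, $M_{n;b}L_n^{q-2}/ L_n^{?}$ unwinds to the homomorphism $v\mapsto v^{q^b}$ after dividing by the Vandermonde-type determinant $L_n$; this is exactly the statement that $\omega_b(V)$, expressed in the coordinate basis, is the cofactor expansion giving $M_{n;b}/L_n$. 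Thus the $\sym^{P_I}$-basis of $A^*(V;\sym)^{P_I}$ found above is precisely $\{\omega_{b_1}(V)\wedge\cdots\wedge\omega_{b_j}(V): 0\le b_1<\cdots<b_j\le n-1\}$, which by definition is an $\F_q$-basis of $\Omega(V)$. Since $\Omega(V)\subseteq A^*(V;\sym)^{GL_n(q)}\subseteq A^*(V;\sym)^{P_I}$ and $\sym^{P_I}\subseteq A^*(V;\sym)^{P_I}$ (degree-$0$ part), the multiplication map $\Omega(V)\otimes_{\F_q}\sym^{P_I}\to A^*(V;\sym)^{P_I}$ is a well-defined homomorphism of bi-graded $\F_q$-algebras; the basis count just performed shows it is bijective, hence an isomorphism.

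The main obstacle is the middle step: pinning down the precise correspondence between Mui's classes $M_{n;b_1,\dots,b_j}$ (viewed inside $\sym\otimes\wedg$) and the maps $\omega_{b_i}(V)$ (viewed inside $A^*(V;\sym)$) under the duality \eqref{eq:iso}. This requires being careful about the identification $\wedge^j(V)^*\cong\wedge^{n-j}(V)\otimes\Det^{q-2}$ at the level of the explicit bases $y_{i_1}\wedge\cdots\wedge y_{i_j}$ versus their complementary wedges, and about the various signs and the scalars $1/j!$ and $(-1)^{j(j-1)/2}$ in the definitions; Lemma~\ref{lem:Mui4.5} does most of the work of reducing multi-index $M$'s to products of single-index ones, which matches the product structure of $\Omega(V)$, but the bookkeeping to confirm that these products land on the nose in $\Omega(V)\cdot\sym^{P_I}$ (rather than in some twist) is where the care is needed. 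Once that dictionary is set up, everything else is a matter of comparing two explicit $\sym^{P_I}$-bases of the same space and checking that the multiplication map respects the bi-grading, which is routine given Theorem~\ref{thm:invparabolic}~(2) and Theorem~\ref{thm:KM-He}.
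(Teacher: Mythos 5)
There is a genuine gap in the middle of your argument, at exactly the point where the real content of the proposition lies. First, a concrete slip: with $k=q-2$ in Theorem~\ref{thm:invparabolic}~(2) one has $q-2-k=0$ but $q-1-k=1$, so the free $\sym^{P_I}$-basis of $(\sym\otimes\wedge^{n-m}(V)\otimes\Det^{q-2})^{P_I}$ is \emph{not} $\{1\}\cup\{M_{m_i;b_1,\ldots,b_{n-m}}\}$; it is $\theta_1\cdots\theta_\ell$ together with the elements $M_{m_i;b_1,\ldots,b_{n-m}}\,\theta_{i+1}\cdots\theta_\ell$. More seriously, your next step --- passing from this basis to ``the same basis appearing in the $GL_n(q)$ case,'' i.e.\ $\{M_{n;b_1,\ldots,b_j}L_n^{q-2}\}$ --- is a non sequitur. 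The disjoint-union statement about the index sets $(b_1,\ldots,b_j)$ (used in the proof of Theorem~B) matches \emph{labels}, not elements: $M_{m_i;b_1,\ldots,b_j}\theta_{i+1}\cdots\theta_\ell$ is a $P_I$-semiinvariant that is not $GL_n(q)$-invariant and is not the element $M_{n;b_1,\ldots,b_j}L_n^{q-2}$ (which, moreover, is Mui's basis element for the \emph{untwisted} $(\sym\otimes\wedg)^{GL_n(q)}$, whereas you are working in the $\Det^{q-2}$-twist, where for $I=(n)$ the factor $L_n^{q-2}$ disappears). Showing that the $GL_n(q)$-invariant classes, together with $\sym^{P_I}$, already span all of $A^*(V;\sym)^{P_I}$ --- whose Theorem~\ref{thm:invparabolic} basis consists of the genuinely parabolic classes above --- is precisely what has to be proved, and your outline supplies no change-of-basis argument for it. In addition, the proposed dictionary $M_{n;b_1,\ldots,b_j}L_n^{q-2}\leftrightarrow\omega_{b_1}(V)\wedge\cdots\wedge\omega_{b_j}(V)$ cannot be right as stated: the isomorphism \eqref{eq:iso} flips exterior degree $j\leftrightarrow n-j$, and degree bookkeeping ($\deg\,\omega_{a_1}\wedge\cdots\wedge\omega_{a_m}=\sum_r q^{a_r}$ versus $\deg M_{n;b_1,\ldots,b_{n-m}}=\frac{q^n-1}{q-1}-\sum_r q^{b_r}$) forces \emph{complementary} index sets, not equal ones.

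The paper's proof sidesteps all of this. It notes the easy inclusion $\Omega(V)\otimes_{\F_q}\sym^{P_I}\subseteq A^*(V;\sym)^{P_I}$ and then compares bi-graded Hilbert series: the left-hand side is $\prod_{i=0}^{n-1}(1+st^{q^i})\cdot H(\sym^{P_I};t)$ by Theorem~\ref{thm:KM-He}, and the right-hand side is computed from \eqref{eq:iso} together with Theorem~B at $k=q-2$ (the flip $s^j\mapsto s^{n-j}$ turning $\prod_{i=0}^{n-1}(s+t^{q^i})$ into $\prod_{i=0}^{n-1}(1+st^{q^i})$), so the inclusion is an equality. Your first and last paragraphs (the inclusion, the algebra-map statement) are fine; if you replace the explicit basis-matching in the middle by this numerical comparison --- or else genuinely carry out a triangular change of basis between the parabolic classes and the $\omega$-monomials, with the complementary-index dictionary and the $q=2$ case handled via Theorem~\ref{thm:Hser.para.} or Theorem~\ref{thm:invparabolic}~(1) --- the proof closes; as written it does not.
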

\begin{proof}
Clearly, $\Omega(V)\otimes_{\F_q}\sym^{P_I}\subseteq
A^*(V;\sym)^{P_I}$. By Theorem~\ref{thm:KM-He} and the definition
of $\omega_i(V)$, the Hilbert series of the bi-graded algebra
$\Omega(V)\otimes_{\F_q}\sym^{P_I}$ is
\begin{align*}
H \big(\Omega(V)\otimes_{\F_q}\sym^{P_I};t,s \big)
=\frac{\prod^{n-1}_{i=0}(1+st^{q^i})}{\prod^{s}_{i=1}
\prod^{n_i}_{j=1}(1-t^{q^{m_i}-q^{m_i-j}})}
\end{align*}
where we recall the notation $m_i$ from \eqref{eq:mi}. It readily
follows by Theorem~B and \eqref{eq:iso} that the Hilbert series of
$A^*(V;\sym)^{P_I}$ is given by the same answer. The proposition
follows.
\end{proof}


\section{The Steinberg module multiplicity in $\sym\otimes\wedg\otimes {\Det}^k$}
\label{sec:Stmult}

In the section we shall prove Theorem~C on the graded multiplicity
of the Steinberg module $\St$ in the bi-graded
$GL_n(q)$-module $\sym\otimes\wedg\otimes\Det^k$ for $0\leq k\leq
q-2$.

For a (bi-)graded $GL_n(q)$-module $N^{\bullet}
=\oplus_{i,j}N^{i,j}$, we denote by $H_{\St}(N^{\bullet};t,s)$ the
graded multiplicity for the Steinberg module $\St$ in $N$. We have
\begin{align*}
H_{\St}(N^{\bullet};t,s)=\sum_{i,j}  t^i s^j \dim
\text{Hom}_{GL_n(q)} (\St, N^{i,j})
\end{align*}
since the Steinberg module $\St$
is absolutely irreducible and projective. For more on
the Steinberg module, we refer to the excellent book of Humphreys \cite{Hu}.

It was observed in \cite[Corollary 1.3]{KM} that, for any
$GL_n(q)$-module $N$,
\begin{align}
{\rm dim~Hom}_{GL_n(q)}(\St,N)=
\sum_{I=(n_1,\ldots,n_{\ell})}(-1)^{n-\ell}{\rm dim}~N^{P_I},
\label{eqn:KM}
\end{align}
where the summation is over all compositions $I$ of $n$ (recall
our convention for composition requires all parts $n_i>0$). The
characteristic zero counterpart of \eqref{eqn:KM} was originally
due to Curtis \cite{Cu} and its geometric and homological
interpretation was given by Solomon \cite{So}. By
Theorem~\ref{thm:KM-He} and \eqref{eqn:KM}, Kuhn and
Mitchell~\cite{KM} showed that
\begin{align}
H_{\St}(\sym;t) &=\sum_{I=(n_1,\ldots,n_{\ell})}(-1)^{n-\ell}
\frac{1}{\prod^{\ell}_{i=1}\prod^{n_i}_{j=1}(1-t^{q^{m_i}-q^{m_i-j}})}\notag\\
&=\frac{t^{-n+\frac{q^n-1}{q-1}}}{\prod^n_{i=1}(1-t^{q^i-1})},
\label{eqn:KMthm}
\end{align}
where \eqref{eqn:KMthm} is a nontrivial combinatorial identity.

\begin{rem}  \label{rem:MP}
Parts~(1) and (2) of Theorem~C in the case when $q$ is a prime
were established by Mitchell-Priddy \cite{MP} and Mitchell
\cite{Mi}, respectively, via a topological approach which involves
Steenrod algebra. A factor $t^{-n}$ in the numerator is missing in
the original formula~\cite[Theorem~2.6]{Mi}.
\end{rem}

Now we are ready to prove Theorem~C.
\begin{proof}[Proof of Theorem~C]
Let us first prove the easier Part (2). Recall the notation $m_i$
from \eqref{eq:mi}. By Theorem~B, the identities
(\ref{eqn:KM})~and~(\ref{eqn:KMthm}), we have
\begin{align*}
H_{\St} & \big(\sym\otimes\wedg\otimes {\rm Det}^k;t,s \big)\\
=&\ds\sum_{I=(n_1,\ldots,n_{\ell})}(-1)^{n-\ell}
H((\sym\otimes\wedg\otimes {\rm Det}^k)^{P_I};s,t)\\
=&\ds t^{(q-2-k)\frac{q^n-1}{q-1}}
\prod^{n-1}_{i=0}(s+t^{q^i})~\sum_{I=(n_1,\ldots,n_{\ell})}(-1)^{n-\ell}
\frac{1}{\prod^{\ell}_{i=1}\prod^{n_i}_{j=1}(1-t^{q^{m_i}-q^{m_i-j}})}\\
=&\ds t^{(q-2-k)\frac{q^n-1}{q-1}}\prod^{n-1}_{i=0}(s+t^{q^i})
\frac{t^{-n+\frac{q^n-1}{q-1}}}{\prod^n_{i=1}(1-t^{q^i-1})}\\
=&\ds t^{-n+(q-1-k)\frac{q^n-1}{q-1}} \cdot
\frac{\prod^{n-1}_{i=0}(s+t^{q^i})}{\prod^n_{i=1}(1-t^{q^i-1})}.
\end{align*}

Now we turn to the proof of (1). It follows by
Theorem~\ref{thm:Hser.para.} and  (\ref{eqn:KM}) that
\begin{align*}
H_{\St} & \big(\sym\otimes\wedg;t,s \big)  \\
=& \ds\sum_{I=(n_1,\ldots,n_{\ell})}
(-1)^{n-\ell}~H((\sym\otimes\wedg)^{P_I};t,s) =X(t,s; q)
\end{align*}
where we have denoted
\begin{align*}
X& (t,s; q) \\
 =& \ds\sum_{I=(n_1,\ldots,n_{\ell})}(-1)^{n-\ell}~
\frac{1-t^{q^{m_1}-1}+\sum^{\ell-1}_{i=1}(t^{q^{m_i}-1}-t^{q^{m_{i+1}}-1})
\prod^{m_i}_{j=1}(1+st^{-q^{j-1}})}
{\prod^{\ell}_{i=1}\prod^{n_i}_{j=1}(1-t^{q^{m_i}-q^{m_i-j}})}
  \\
& \quad +\sum_{I=(n_1,\ldots,n_{\ell})}
(-1)^{n-\ell}~\frac{t^{q^n-1}\prod^n_{j=1}(1+st^{-q^{j-1}})}
{\prod^{\ell}_{i=1}\prod^{n_i}_{j=1}(1-t^{q^{m_i}-q^{m_i-j}})}.
\end{align*}
Hence, Part~(1) of Theorem~C is clearly equivalent to the validity
of the following combinatorial identity:
\begin{equation} \label{identity}
X (t,s; q)
 = \ds t^{-n} \cdot\frac{ (st^{q^n-1}+t^{q^{n-1}})
\prod^{n-2}_{i=0}(s+t^{q^i})}{\prod^n_{i=1}(1-t^{q^i-1})}.
\end{equation}

We do not have a direct proof of this combinatorial identity
\eqref{identity}, so we proceed in a roundabout way as follows. By
Remark~\ref{rem:MP}, Theorem~C~(1), and hence the identity
\eqref{identity}, is known to be true when $q$ is an arbitrary
prime. If we clear the denominator in \eqref{identity}, this
combinatorial identity is a polynomial equation in $t, s$, and
either side of this polynomial equation is a finite linear
combination of monomials in $t,s$ (the number of such monomials is
independent of $q$). This polynomial equation  holds for
infinitely many positive integer values $q$ (i.e., the primes),
and thus it must be a universal identity valid for every
positive integer $q$ (and in particular valid for every prime
power $q$). Hence, Part~(1) of Theorem~C is proved.
\end{proof}

\begin{rem}
It will be very interesting to find a direct proof of the
identity \eqref{identity}.
\end{rem}


\end{document}